\theoremstyle{plain}
\newtheorem{thm}{Theorem}
\newtheorem{lem}[thm]{Lemma}
\newtheorem{cor}[thm]{Corollary}
\newtheorem{prop}[thm]{Proposition}
\theoremstyle{Definition}
\newtheorem{ex}[thm]{Example}
\newtheorem{assumption}{Assumption}
\theoremstyle{remark}
\DeclareMathOperator*{\argmin}{arg\,min}
\def\dist{\operatorname{dist}}
\def\grad{\operatorname{grad}}
\def \<{\langle}
\def \>{\rangle}
\def\N{{\mathbb N}}
\def\Z{{\mathbb Z}}
\def\Exende{\def\qedsymbol{$\diamondsuit$}\qed}
\def\CH{Car\-tan-\hskip0pt Ha\-da\-mard}
\begin{document}

\title{Convergence analysis of subdivision processes on the sphere}

\author{Svenja H\"uning, Johannes Wallner}
\address{\newline
Institut
f.\ Geometrie, TU Graz. Kopernikusgasse 24, 8010 Graz, Austria.
\newline
email $\{$huening,j.wallner$\}$@tugraz.at}

\maketitle

\begin{abstract}
{This paper provides a strategy to analyse the convergence of nonlinear analogues of linear subdivision processes on the sphere. 
In contrast to previous work, we study the Riemannian analogue of a linear scheme on a Riemannian manifold with positive sectional curvature. Our result can be applied to all general subdivision schemes without any sign restriction on the mask.}
{refinement algorithm; approximation theory; differential geometry.}
\end{abstract}

\section{Introduction}

Subdivision schemes are iterative refinement algorithms used to produce smooth curves and surfaces. For data lying in linear spaces those refinement rules are well-studied and find applications in various areas ranging from approximation theory to computer graphics. We recommend \cite{dyn2, cavaretta} and \cite{peters} for an introduction and good overview on this topic.

This paper contributes new results to the convergence analysis of subdivision schemes applied to data lying in nonlinear spaces, such as Lie groups, symmetric spaces or Riemannian manifolds. 
Several different methods to transfer linear schemes to nonlinear geometries have been studied (see \cite{grohs} for an overview). 
We mention the \emph{log-exp-analogue} which uses the exponential map and the linear structure of the tangent space by \cite{donoho} and \cite{rahman} as well as the \emph{projection analogue} which can be applied to surfaces embedded into an Euclidean space, see \cite{xie2} and \cite{grohs5}. Generally, such methods are only locally well-defined.
The \emph{Riemmanian analogue} which is obtained from a linear scheme by replacing affine averages by weighted geodesic averages can be made globally well-defined on complete Riemannian manifolds with nonpositive sectional curvature \cite{huening2019, wallner, ebner1,ebner2}. The well-definedness is based on the existence and uniqueness of the so-called \emph{Riemannian center of mass} which has been studied in various contexts \cite{karcher, sander, dyer-arxiv, dyer2016, pennec}. 
 
Having transferred the linear scheme to nonlinear geometries questions of their properties arise naturally. 
While the smoothness of resulting limit curves is already thoroughly investigated, see e.g.\  \cite{wallner3, grohs3, wallnerdyn,xie2,grohs, caroline1}, the situation is different for the convergence analysis of nonlinear analogues of linear schemes. 
Results in literature based on so-called \emph{proximity conditions} are limited to `dense enough' input data. Convergence results for all input data are given for univariate interpolatory schemes \cite{wallner2}, multivariate schemes with nonnegative mask \cite{ebner1, ebner2} or schemes which are obtained from linear ones by choosing binary geodesic averages instead of linear binary averages \cite{dyn4, dyn3}. In \cite{huening2019} we proved that the Riemannian analogue of a univariate linear scheme on complete Riemannian manifolds with nonpositive sectional curvature converges if the linear scheme converges uniformly. This result was previously only known for schemes with nonnegative mask \cite{wallner,ebner1, ebner2}.

In this paper, we extend our work to a convergence results for refinement algorithms on the sphere. It turns out that even for this rather elementary manifold the situation is appreciably different from the case of nonpositive curvature.
   
The paper is organised as follows. We start by repeating some basic facts about linear subdivision and introduce our notation. Next, we discuss the Riemannian analogue of a linear scheme on positively curved spaces. In particular, we discuss well-definedness on the sphere. Afterwards, we introduce our strategy to prove convergence. To make this rather technical part easier to understand we illustrate our computations in terms of a major example (the cubic Lane-Riesenfeld scheme). The last part contains the convergence analysis of several well-known subdivision schemes.

\section{Linear subdivision and its Riemannian analogue} \label{section_sphere}
We start by recalling basic concepts. 
Let $(x_i)_{i\in \mathbb{Z}}$ be a sequence of points in a linear space. We refer to this sequence as our \emph{input data}. A linear, binary \emph{subdivision rule} $S$ maps the input data to a new sequence $(Sx_i)_{i\in \mathbb{Z}}$ where
\begin{align*}
Sx_i=\sum_{j\in \mathbb{Z}}a_{i-2j}x_j.
\end{align*}
The sequence of coefficients $a=(a_i)_{i\in \mathbb{Z}}$ is called the \emph{mask} of the scheme. Throughout this paper we assume that the mask is finitely supported, i.e., $a_i\neq 0$ for only finitely many $i\in \mathbb{Z}$. 
A \emph{subdivision scheme} is the repeated application $S$, $S^2$, $S^3$, $\dots$ of the subdivision rule. \emph{Affine invariance}, resp. \emph{translation invariance}, of a linear subdivision scheme is expressed by
\begin{align} \label{affine invariance}
\sum_{j\in \mathbb{Z}} a_{2j}=\sum_{j\in \mathbb{Z}} a_{2j+1}=1.
\end{align}
It is a necessary condition for its convergence \cite{cavaretta, dyn2}.
From now on, we assume that all considered schemes are affine invariant.

We call a linear subdivision scheme $S$ convergent, if there exist piecewise linear functions $f_k$ with $f_k\left(\frac{i}{2^k}\right)=\left(Sx\right)_i$
which converge, uniformly on compact sets, to a continuous limit.\\

Denote by $M$ a manifold with Riemannian metric. The \emph{distance} of two points $p$, $q \in M$ is bounded by the length over all $\gamma:[0,1]\rightarrow M$ with $\gamma(0)=p$ and $\gamma(1)=q$, i.e.\
\begin{align*}
\dist\left(p,q\right):=\inf_{\gamma}\int_{0}^1 \vert \dot{\gamma}(t) \vert~dt.
\end{align*}

To adapt a subdivision scheme to data lying in nonlinear geometries we replace affine averages by geodesic averages.
Therefore, we observe that the point $Sx_i=\sum_{j\in \Z}a_{i-2j}x_j$ can be equivalently described as 
\begin{align*}
\argmin_{x} \sum_{j\in \mathbb{Z}}a_{i-2j}\hspace{0.05cm}\vert x-x_j\vert^2.
\end{align*}
A natural extension of $S$ to nonlinear data is defined by replacing the Euclidean distance by the Riemannian distance. We define 
\begin{align} \label{min:fct}
Tx_{i}:=\argmin_{x}\sum_{j\in \mathbb{Z}}a_{i-2j}\dist\left(x,x_j\right)^2, \hspace{0.2cm} i\in \Z,
\end{align}
and call $T$ the \emph{Riemannian analogue} of the linear subdivision rule $S$, see also \cite{grohs}. The minimiser of $\sum_{j\in \mathbb{Z}}a_{i-2j}\dist\left(x,x_j\right)^2$ is the \emph{Riemannian center of mass} of points $x_j$ with respect to weights $a_{i-2j}$.

Locally, the minimiser (\ref{min:fct}) always exists but globally the question is more difficult. 
On Cartan-Hadamard manifolds, i.e.\ simply connected, complete Riemannian manifolds with nonpositive sectional curvature, a unique, global minimiser always exists \cite{hardering, sander,karcher}. 
 
On positively curved spaces, however, the situation is different. 
We regard this problem in more detail in the next section. Before, we introduce the contractivity condition and the displacement-safe condition we need in our analysis.

We say that $T$ satisfies a \emph{contractivity condition} with \emph{contractivity factor} $\mu \in (0,1)$, if
\begin{align} \label{contractivity}
	\dist (T^{k}x_{i+1},T^{k}x_{i})
	\leqslant \mu^{k}\cdot 
	\sup\limits_{\ell} \dist (x_{\ell},x_{\ell+1}), \hspace{0.2cm} i\in \Z, ~k\in \N .
\end{align}  
The subdivision rule $T$ is called \emph{displacement-safe}, if 
\begin{align}\label{displacement-safe}
\dist (Tx_{2i},x_{i}) &\leqslant C\cdot\sup\limits_{\ell} \dist (x_{\ell},x_{\ell+1}), \hspace{0.2cm} i\in \Z 
\end{align}
for some constant $C>0$.

We say that $T$ \emph{converges} for input data $x$ if the sequence $x,Tx,T^2x,\dots$ becomes denser and approaches a continuous limit curve. Formally, we treat convergence in a coordinate chart, linearly interpolating points $T^kx_i$ by a piecewise linear function $f_k$ with $f_k\left(\frac {i}{2^k}\right)=T^kx_i$, and observe convergence of functions $f_k$ for $k \rightarrow \infty$.
It has been shown in various situations that displacement-safe schemes which admit a contractivity factor $\mu <1$ are convergent \cite{dyn3, dyn4,wallner,huening2019}. 
For the reader's convenience we repeat the precise statement which can be found for example in \cite{wallner} and \cite{huening2019}.

\begin{thm} \label{convergence}
Consider a linear, binary, affine invariant subdivision scheme \(S\). Assume that the Riemannian analogue $T$ of \(S\) on a manifold \(M\) is well-defined. Then, \(T\) converges to a continuous limit \(T^{\infty}x\) for all input data \(x\), if it admits a contractivity factor \(\mu <1\) and is displacement-safe.
\end{thm}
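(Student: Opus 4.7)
The plan is to show that, in a suitable coordinate chart, the piecewise linear interpolants $f_k$ form a Cauchy sequence in the uniform norm on compact parameter intervals; the limit will be the desired continuous curve $T^\infty x$.

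First I would record the key one-step estimate. Setting $d_k := \sup_\ell \dist(T^k x_\ell, T^k x_{\ell+1})$, the contractivity assumption gives the geometric decay $d_k \leqslant \mu^k d_0$, where $d_0 := \sup_\ell \dist(x_\ell, x_{\ell+1})$. Applying the displacement-safe inequality to the iterated data $T^k x$ in place of $x$ yields
\begin{align*}
\dist\bigl(T^{k+1} x_{2i},\, T^k x_i\bigr) \leqslant C\, d_k \leqslant C \mu^k d_0,
\end{align*}
i.e.\ passing from refinement level $k$ to level $k+1$ moves every existing node by at most $C\mu^k d_0$. From this I would deduce pointwise convergence at every dyadic parameter $t_0 = i_0/2^{k_0}$: the sequence $p_k := T^k x_{i_0 \cdot 2^{k-k_0}}$ (for $k \geq k_0$) is Cauchy in $M$ by summing a geometric series, so converges to a limit $F(t_0)$. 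A standard dyadic telescoping argument then shows that whenever $t,t'$ are dyadic with $|t-t'| \leqslant 2^{-k}$, one has $\dist(F(t),F(t')) \leqslant K\mu^k d_0$ for some constant $K$. Hence $F$ is uniformly continuous on dyadics and extends continuously to all of $\R$.

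Finally I would upgrade pointwise convergence to uniform convergence of $f_k$ in a fixed chart. Since $d_k \to 0$ geometrically, for $k$ sufficiently large all iterates relevant to a given compact parameter range lie in a single normal coordinate neighborhood where chart and Riemannian distances are bi-Lipschitz equivalent. Inside that chart the piecewise linear segments of $f_k$ have Euclidean length comparable to $d_k$, whence $\| f_{k+1} - f_k \|_\infty \leqslant K'\mu^k d_0$, and telescoping yields uniform convergence of $f_k$ to a continuous limit that agrees with $F$ on dyadics. The main technical obstacle, I expect, lies precisely in this last step: verifying that after finitely many refinements all further iterates fit into one such normal neighborhood, and that the chart-versus-Riemannian distance comparison is uniform there. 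This is exactly the point where the geometry of $M$ (and, in the later sphere-specific sections, its injectivity radius) enters; once contractivity has forced everything into a small region, the argument proceeds just as in the \CH case treated in \cite{wallner, huening2019}.
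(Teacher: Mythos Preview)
The paper does not actually prove this theorem; it is stated as a known result imported from the literature (the sentence immediately preceding it reads ``For the reader's convenience we repeat the precise statement which can be found for example in \cite{wallner} and \cite{huening2019}''). So there is no in-paper proof to compare against.

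Your sketch is essentially the standard argument used in those references: derive geometric decay $d_k\leqslant\mu^k d_0$ from contractivity, use the displacement-safe bound applied to $T^kx$ to control $\dist(T^{k+1}x_{2i},T^kx_i)$, telescope to get a Cauchy sequence at each dyadic parameter, and finally pass to a single coordinate chart (using that $d_k\to 0$ forces the iterates into a small ball where chart and Riemannian distances are comparable) to obtain uniform convergence of the piecewise-linear interpolants. This is correct in outline and matches the treatment in \cite{wallner,huening2019}. Two small points worth tightening: (i) the displacement-safe condition only controls the even-indexed new points $Tx_{2i}$, so for $\|f_{k+1}-f_k\|_\infty$ you also need to bound the odd nodes $T^{k+1}x_{2j+1}$ against the chart-midpoint of $T^kx_j$ and $T^kx_{j+1}$, which follows from combining the even-index bound with $d_{k+1}\leqslant\mu d_k$; and (ii) existence of the limit requires completeness of $M$, an implicit hypothesis here that is explicit in the cited sources.
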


\section{Riemannian center of mass on manifolds with positive sectional curvature} \label{part1}
Before we restrict ourselves to the unit sphere, we discuss the difficulties that arise by studying the Riemannian analogue of a linear subdivision scheme on positively-curved manifolds. Let \(M\) be a complete, simply connected Riemannian manifold 
with sectional curvature $K>0$. Denote by $B_r(x)=\lbrace y \in M \mid \dist\left(x,y\right)<r\rbrace$ the geodesic ball of radius $r>0$ around $x\in M$ where $\emph{dist}$ again denotes the Riemannian distance. 

\subsection{Problem setting}
To study the convergence of a Riemannian subdivision rule $T$ as given in (\ref{min:fct}) we have to deal with the question if the function
\begin{align}\label{minimizationfunction2}
f_{\alpha}(x)=\sum_{j=-m}^{m+1} \alpha_j \dist \left(x_j,x\right)^2, 
	\quad\text{with}\quad
	\sum_j \alpha_j =1 
\end{align}
admits a unique minimiser. Here $x_{j}\in M$ are fixed points on the manifold and $\alpha_{j}$ are real coefficients. Later, the points $x_{j}$ correspond to the input data while the coefficients $\alpha_{j}$ belong to the mask of a subdivision scheme. Denote by
\begin{align*}
\alpha_{-}:=\sum_{\alpha_j<0} \vert \alpha_j\vert
\end{align*}
the sum of the absolute values of the negative coefficients.
In contrast to \CH\ manifolds, we cannot hope for general global existence and uniqueness of the Riemannian center of mass. To see this, consider the north pole $x_N$ resp.\ south pole $x_S$ of the sphere and ask for their geodesic midpoint. Clearly, each point on the equator is a suitable choice and thus, a minimiser of $f(x)=\frac 12 \dist\left(x_N,x\right)^2+\frac 12 \dist\left(x_S,x\right)^2$. 
One can show that locally there always exists a unique minimiser while globally there can be infinitely many. 

A number of contributions deals with the question of the effect of the sectional curvature, the distances between the points $x_j$ and the choice of the coefficients on the existence of a unique minimiser, see for example
\cite{karcher, dyer-arxiv}. In \cite{dyer-arxiv} the authors provide explicit bounds on the input data (depending on the curvature and the chosen coefficients) to ensure the existence and uniqueness of a minimiser of (\ref{minimizationfunction2}) on manifolds with positive sectional curvature. 
In our setting, Corollary 9 of \cite{dyer-arxiv} reads as follows.
\begin{lem}[Dyer et al., \cite{dyer-arxiv}] \label{dyer}
Let ${x_j}\in B_r(x)$, $j=-m,\dots,m+1$, for some $x\in M$ and $r>0$. Then, the function $f_{\alpha}$ has a unique minimiser in $B_{r^*}(x)$, if 
\begin{enumerate}[i)]
\item $r<r^*<\min\lbrace \frac{\iota_M}{2},\frac{\pi}{4\sqrt{K}}\rbrace$, with $\iota_M$ denoting the injectivity radius of $M$,
\item $r^* > (1+2\alpha_-)r$,
\item $r^* < \frac{\pi}{4\sqrt{K}}(1+(1+\frac{\pi}{2})\alpha_-)^{-1}$.
\end{enumerate}
\end{lem}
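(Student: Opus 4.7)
The plan is to treat existence and uniqueness separately, in the spirit of Karcher's classical argument for the Riemannian centre of mass, but adapted to signed weights. Condition (i) places everything inside a strongly convex ball within the injectivity radius; condition (ii) will be used to keep the minimiser away from $\partial B_{r^*}(x)$; condition (iii) will yield strict geodesic convexity of $f_\alpha$ throughout $B_{r^*}(x)$.

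For existence, condition (i) implies that $\overline{B_{r^*}(x)}$ is compact and strongly geodesically convex (balls of radius below $\min\{\iota_M/2,\pi/(2\sqrt{K})\}$ have this property in positively curved spaces), so $f_\alpha$ attains a minimum there. To force this minimum into the open interior I would pick $y\in\partial B_{r^*}(x)$, write
\[
\grad f_\alpha(y) = -2\sum_j \alpha_j \exp_y^{-1}(x_j),
\]
and pair it with the outward unit radial vector $\nu_y$ at $y$. A first-variation / Toponogov-comparison argument inside the convex ball gives $\langle \exp_y^{-1}(x_j),\nu_y\rangle \leq -(r^*-r)$, which is favourable for positive $\alpha_j$, while for negative $\alpha_j$ one uses only the Cauchy-Schwarz bound $\langle \exp_y^{-1}(x_j),\nu_y\rangle \geq -(r^*+r)$. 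With $\sum_{\alpha_j>0}\alpha_j = 1+\alpha_-$ and $\sum_{\alpha_j<0}\alpha_j = -\alpha_-$, the contributions combine to
\[
\langle \grad f_\alpha(y), \nu_y\rangle \;\geq\; 2\bigl(r^* - (1+2\alpha_-)r\bigr),
\]
which is strictly positive by (ii). So $f_\alpha$ strictly decreases inward across every boundary point, and the minimum must lie in the open ball $B_{r^*}(x)$.

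For uniqueness I would show that $f_\alpha$ is strictly geodesically convex on $B_{r^*}(x)$. For any $z\in B_{r^*}(x)$ and any $x_j\in B_r(x)$ one has $d_j:=\dist(z,x_j) < r+r^* < 2r^* < \pi/(2\sqrt{K})$, so two-sided Rauch comparison applies: the Hessian of $\tfrac12\dist(\cdot,x_j)^2$ at $z$ is sandwiched between $d_j\sqrt{K}\cot(d_j\sqrt{K})\,g_z$ and $g_z$. For positive $\alpha_j$ I use the lower bound together with the uniform estimate $d_j\sqrt{K}\cot(d_j\sqrt{K})\geq \lambda:=2r^*\sqrt{K}\cot(2r^*\sqrt{K})$; for negative $\alpha_j$ the upper bound is used, and the sign of $\alpha_j$ flips the inequality favourably. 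Summing,
\[
\operatorname{Hess} f_\alpha(z) \;\geq\; 2\bigl((1+\alpha_-)\lambda - \alpha_-\bigr)\, g_z.
\]
Condition (iii) is the quantitative input that yields $(1+\alpha_-)\lambda>\alpha_-$ throughout the relevant range of $\theta=2r^*\sqrt{K}\in(0,\pi/2)$, via an elementary bound on $\theta\cot\theta$; combined with the interior critical point produced in the existence step, strict convexity forces the minimiser to be unique.

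The main obstacle is the sharp curvature-comparison bookkeeping in the presence of the signed mask. In the nonpositively curved case one has the \CH\ luxury that $d^2/2$ is globally convex with Hessian at least the metric, so the whole argument collapses to a textbook convex-minimisation exercise. Here, in contrast, both the inward-pointing estimate for $\langle \exp_y^{-1}(x_j),\nu_y\rangle$ and the Hessian lower bound $\lambda$ degrade as $r^*\sqrt{K}$ approaches $\pi/4$, and they must be balanced against the full negative mass $\alpha_-$; converting (iii) into the precise inequality $(1+\alpha_-)\lambda>\alpha_-$ is the delicate numerical step that fixes the specific form of the constants appearing in the statement.
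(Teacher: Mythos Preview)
The paper does not give its own proof of this lemma. It is quoted verbatim (as ``Corollary~9 of \cite{dyer-arxiv}'') and used as a black box to set up Proposition~\ref{welldefinednesssphere}; all the paper's own work concerns what happens \emph{after} one knows the weighted centre is well defined. So there is nothing in the paper to compare your argument against.

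That said, your sketch is exactly the Karcher-style strategy that Dyer et al.\ employ: an outward-pointing gradient on $\partial B_{r^*}(x)$ from condition~(ii) to trap a minimiser in the interior, and a Hessian lower bound from condition~(iii) to force uniqueness. Your identification of the roles of (i)--(iii) is correct, and the boundary estimate $\langle\grad f_\alpha(y),\nu_y\rangle\geq 2\bigl(r^*-(1+2\alpha_-)r\bigr)$ does go through in positive curvature (the law-of-cosines correction factor $d/\sin d\geq 1$ works in your favour there). The one place where your write-up is thinner than the cited source is the last step: you assert that (iii) delivers $(1+\alpha_-)\lambda>\alpha_-$ ``via an elementary bound on $\theta\cot\theta$'', but the specific constant $1+\tfrac{\pi}{2}$ in (iii) is strictly stronger than what the chord bound $\theta\cot\theta\geq 1-\tfrac{2}{\pi}\theta$ would require; in Dyer et al.\ the extra room is consumed by a more careful treatment of the Hessian comparison when only an upper curvature bound is assumed. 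If you want a self-contained proof rather than a sketch, that numerical step is where the bookkeeping has to be made explicit.
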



A convergence result for nonlinear subdivision schemes depends on the capability to control the distances of points of the sequence $(T^{k}x_i)_{i\in\mathbb{Z}}$ from each other as well as their distance to the input data. Unfortunately, Lemma \ref{dyer} cannot directly be used to control those distances.


\smallskip
Summarising, on manifolds with positive sectional curvature

\begin{itemize}
\item[i)] we cannot hope for a convergence result which is valid for all input data.
\item[ii)] we obtain a local setting in which the Riemannian analogue of a linear subdivision scheme is well defined, see \cite{dyer-arxiv}.
\item[iii)] we have to find a strategy to estimate distances between consecutive points of the refined data as well as their distance to the input data.
\end{itemize}

\subsection{The Riemannian analogue of a linear subdivision scheme on the unit sphere} \label{Ra_sphere}
From now on, we restrict ourselves to the \emph{unit sphere} $\Sigma^n\subseteq \mathbb{R}^{n+1}$ for $n\geqslant 2$.
In particular, we have $K=1$.
We provide a setting on the unit sphere in which we can define the Riemannian analogue of a linear subdivision scheme.
Choose $x_j \in \Sigma^n$, $j=-m,\dots,m+1$, such that $x_j \in B_r(x)$ for some $r>0$ and $x\in \Sigma^n$.
Since the sectional curvature $K=1$ on the unit sphere and the injectivity radius is $\pi$, Lemma \ref{dyer} says the following: For input data in $B_{r}(x)$ the minimiser of $f_\alpha$ is unique and lies in the ball $B_{r^*}(x)$, if 
\begin{align}
r^* &> \left(1+2\alpha_-\right)r \geqslant r,\label{ii}\\ 
r^* &< \frac{\pi}{4}\left(1+\left(1+\frac{\pi}{2}\right)\alpha_-\right)^{-1}. \label{iii}
\end{align}
In the special case of a scheme with only nonnegative coefficients, i.e., $\alpha_-=0$, these conditions reduce to: If $r<\frac{\pi}{4}$, there exists a radius $r^*$ with $r<r^*<\frac{\pi}{4}$ such that the function $f_\alpha$ has a unique minimiser inside $B_{r^*}(x)$. In our particular setting of subdivision rules, we summarise the results of \cite{dyer-arxiv} as follows. 

\begin{prop} \label{welldefinednesssphere}
Let $T$ be the Riemannian analogue of a linear subdivision scheme $S$ with mask $a$ on the unit sphere $\Sigma^n$. With $\alpha_{-}=\sum_{\alpha_j<0} \vert \alpha_j\vert$, we have the two cases:
\begin{description}
\item[Case $\alpha_-=0$]\hfill \\
$Tx_i$ is well defined if the input data points $x_i$ contributing to the computation of $Tx_i$ lie within a ball of radius $r<\frac{\pi}{4}$. 
\item[Case $\alpha_->0$]\hfill \\
$Tx_i$ is well defined if the input data points $x_i$ contributing to the computation of $Tx_i$ lie within a ball of radius $r$ such that there exists an $r^*>r$ satisfying (\ref{ii}) and (\ref{iii}).
\end{description}
\end{prop}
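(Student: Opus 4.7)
The plan is to obtain Proposition \ref{welldefinednesssphere} as a direct specialisation of Lemma \ref{dyer} to $M=\Sigma^n$, using the affine invariance of $S$ to recast the defining condition of $Tx_i$ in the form (\ref{minimizationfunction2}). Concretely, by affine invariance the coefficients $\alpha_j := a_{i-2j}$ satisfy $\sum_j \alpha_j = 1$, so $Tx_i$ is the unique minimiser (if it exists) of $f_\alpha(x) = \sum_j \alpha_j\dist(x_j,x)^2$ on $\Sigma^n$. Hence well-definedness of $Tx_i$ is precisely the hypothesis of Lemma \ref{dyer} for this particular choice of weights $\alpha_j$ on the sphere.

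Next I would plug in the geometric invariants of $\Sigma^n$. Since $K=1$ and the injectivity radius equals $\pi$, the bound $\min\{\iota_M/2,\pi/(4\sqrt{K})\}$ appearing in part (i) of Lemma \ref{dyer} simplifies to $\min\{\pi/2,\pi/4\} = \pi/4$. Thus Lemma \ref{dyer} guarantees existence of a unique minimiser in $B_{r^*}(x)$ as soon as one can produce an $r^*$ satisfying
\[
 r < r^* < \tfrac{\pi}{4},
 \qquad
 r^* > (1+2\alpha_-) r,
 \qquad
 r^* < \tfrac{\pi}{4}\bigl(1+(1+\tfrac{\pi}{2})\alpha_-\bigr)^{-1}.
\]

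For the case $\alpha_- = 0$, the last two conditions collapse to $r^* > r$ and $r^* < \pi/4$; under the hypothesis $r < \pi/4$ any $r^* \in (r,\pi/4)$ works, giving well-definedness of $Tx_i$. For the case $\alpha_- > 0$, the assumption of the proposition is exactly that such an $r^*$ exists satisfying (\ref{ii}) and (\ref{iii}); one only needs to observe that (\ref{iii}) already forces $r^* < \pi/4$ and that $r^* > r > 0$ is immediate from (\ref{ii}), so condition (i) of Lemma \ref{dyer} is automatic, and the lemma again supplies the unique minimiser. There is no real obstacle here: the content of the proposition is bookkeeping of the constants in Lemma \ref{dyer} once one has identified the relevant geometric data of $\Sigma^n$; the only point to check is that the absolute sums of negative coefficients $\alpha_-$ that figure in Lemma \ref{dyer} agree with the $\alpha_-$ attached to our subdivision mask, which is immediate from $\alpha_j = a_{i-2j}$.
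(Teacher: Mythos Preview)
Your proof is correct and follows exactly the paper's own reasoning: the paper does not give a separate formal proof of Proposition~\ref{welldefinednesssphere} but derives it in the discussion immediately preceding the statement by specialising Lemma~\ref{dyer} to $\Sigma^n$ with $K=1$ and $\iota_M=\pi$, obtaining conditions (\ref{ii}) and (\ref{iii}) and noting their simplification when $\alpha_-=0$. Your additional observation that (\ref{ii}) and (\ref{iii}) already imply condition~(i) of Lemma~\ref{dyer} makes explicit a step the paper leaves to the reader.
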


\section{A strategy to prove convergence of Riemannian subdivision schemes}
\label{part2}



We show a strategy to prove convergence results for the Riemannian analogue $T$ of a linear scheme $S$ on the unit sphere. 
In order to give bounds for $\dist\left(Tx_i,Tx_{i+1}\right)$ and $\dist\left(x_i,Tx_{2i}\right)$ we join the points involved by curves, and estimate the length of those curves. This requires technical details involving a second order Taylor approximation and estimates for the gradient and the Hessian of squared distance functions on the unit sphere. Throughout this part, the cubic Lane-Riesenfeld rule serves as a main example to illustrate our results. We assume that the considered minima are well defined and unique. 



\subsection{The Riemannian distance function on the unit sphere} \label{distsphere}
We use explicit formulas for the gradient and the Hessian of the squared distance function $\dist\left(\cdot,y\right)^2$ computed in \cite[Supplement A]{pennec} as an example of a more general analysis of Hessians of squared distance functions on manifolds. 
We introduce some notation and state results of \cite{pennec} which we later use. 

Let $T_x\Sigma^n=\lbrace w\in \mathbb{R}^{n+1} \mid \langle w, x \rangle =0 \rbrace$ denote the \emph{tangent space} at a point $x\in \Sigma^n$. For two points $x$, $y \in \Sigma^n$, $x\neq -y$, their distance is given by $\dist\left(x,y\right)=\arccos\left(\langle x,y\rangle\right)$. The exponential map at $x\in \Sigma^n$ is given by
\begin{align*}
\exp_x: T_x\Sigma^n &\rightarrow \Sigma^n  \quad
w\mapsto \cos\left(\Vert w \Vert\right)x+\frac{\sin\left(\Vert w \Vert\right)}{\Vert w \Vert}w.
\end{align*} 
The inverse of the exponential map at $x$ is well defined everywhere except for the antipodal point of $x$,
\begin{align} \label{log}
\exp^{-1}_x: \Sigma^n\setminus \lbrace -x\rbrace &\rightarrow T_x\Sigma^n \quad
y\mapsto \frac{\dist\left(x,y\right)}{\sin\left(\dist\left(x,y\right)\right)}(y-\cos\left(\dist\left(x,y\right)\right)x).
\end{align} 
We will always tacitly assume that $\frac{s}{\sin(s)}$ means an analytic function which evaluates to $1$ for $s=0$. For a tangent vector $w \in T_x\Sigma^n$, $\exp_x(w)$ denotes the point on $\Sigma^n$ which is reached by the geodesic starting in $x$ in direction $w$ travelling the length of $\Vert w\Vert$. We can therefore use $\exp_x$ to switch between $\Sigma^n$ and $T_x\Sigma^n$, such that straight lines through the origin in the tangent space are isometrically mapped to geodesics through $x$. 
Let  
\begin{align*} 
g:\Sigma^n &\rightarrow \mathbb{R}
\end{align*}
be a function on the sphere. Then, 
\begin{align*} 
\tilde{g}=g \circ \exp_x:T_x\Sigma^n &\rightarrow \mathbb{R}
\end{align*}
is a representation of $g$ with respect to the coordinate chart $\exp_x^{-1}$. 
Since the first derivative of the exponential map is the identity we have
\begin{align}\label{fct_g2}
\grad\left(g\right)\left(x\right)=\grad\left(\tilde{g}\right)\left(0\right).
\end{align}
As far as the first derivative is concerned, it makes no difference if we consider $g$ or $\tilde{g}$. 
The Hessian $H(\tilde{g})$ can be computed since the function is defined on the linear space $T_x\Sigma^n$. For purposes of this paper, we define the Hessian of $g$ by
\begin{align} \label{fct_g}
H\left(g\right)\left(x\right):=H\left(\tilde{g}\right)\left(0\right).
\end{align} 
For any fixed $y\in M$ the gradient of the squared distance function is given by  
\begin{align}
\grad\left(\dist(\cdot,y\right)^2)(x)=-2\exp^{-1}_x(y).
\end{align}
To simplify notation we introduce the analytic function 
\begin{align*}
\psi(s)=\frac{s}{\tan\left(s\right)}
\end{align*}
with $\psi(0)=0$.
Let $y=\exp_x\left(\rho v\right)$ with $\Vert v\Vert =1$ and $I \in \mathbb{R}^{(n+1)\times (n+1)}$ be the identity matrix. The Hessian of $\dist\left(\cdot,y\right)^2$ (in the sense defined above) has been computed in \cite{pennec} as
\begin{align}
H\left(\dist\left(\cdot,y\right)^2\right)(x)=2\left(vv^T+\psi(\rho)(I-xx^T-vv^T)\right). 
\end{align}
Here $x^T$, $v^T$ denote the \emph{transpose} of column vectors $x$ resp.\ $v$. 
If $x=y$, we have $H\big(\dist\big(\cdot,x\big)^2\big)(x)= 2(I-xx^T)$.

 The eigenvalues of the Hessian are $\lambda_1=0$, $\lambda_2=2$ and $\lambda_3=2\psi\left(\dist\left(x,y\right)\right)$.\\
By linearity we obtain 
\begin{align}
\grad\left(f_\alpha\right)(x)&=-2\sum_{j=-m}^{m+1} \alpha_j \exp^{-1}_x(x_j),\label{sph_gradientf}\\
H\left(f_\alpha\right)(x)=2\sum_{j=-m}^{m+1} \alpha_j &\left(v_jv_j^T+\psi\left(\rho_j\right)\left(I-xx^T-v_jv_j^T\right)\right)\label{sph_hessianf} 
\end{align}
with $x_j=\exp_x\left(\rho_j v_j\right)$, $\Vert v_j\Vert=1$.

\subsection{Taylor approximation of the squared distance function} \label{sec_tay}

The second order Taylor expansion of the squared distance function helps to find an upper bound on the distances between the minimiser of $f_\alpha$ and the input data $x_{j}$. 

Assume that $x^* \in \Sigma^n$ is the minimiser of $f_\alpha$. 
Without loss of generality we choose coordinates such that $x^*=[0,\dots,0,1]^T$. Then, the first $n$ canonical basis vectors span $T_{x^*}\Sigma^n$. Now, we consider the coordinate representation $\tilde{f}_{\alpha}$ and compute its Hessian. 
Due to the particular coordinate system the gradient of $f_\alpha$ consists of the first $n$ entries of the vector given by (\ref{sph_gradientf}). The Hessian is given by the $n\times n$ submatrix of $H\left(f_{\alpha}\right)$ as in (\ref{sph_hessianf}) obtained by deleting the last column and row.  
The second order Taylor approximation of $\tilde{f}_{\alpha}$ is given by
\begin{align*}
T\tilde{f}_\alpha(x)=f_\alpha(x^*)+(x-x^*)^T\grad\left(f_\alpha\right)(x^*)+\frac 12 (x-x^*)^T H\left(f_\alpha\right)(x^*)\left(x-x^*\right).
\end{align*}
Differentiation leads to
\begin{align} 
\grad(T\tilde{f}_\alpha)(x)=\grad(f_\alpha)(x^*)+H(f_\alpha)(x^*)x.
\end{align}
Since we are looking for minimisers of the function $f_\alpha$, the idea is to consider minimisers of $T\tilde{f}_{\alpha}$ instead. If $H\left(f_\alpha\right)(x^*)$ is invertible, then the condition
\begin{align}
\grad\left(f_\alpha\right)(x^*)+H\left(f_\alpha\right)(x^*)x\overset{!}{=}0
\end{align}
is solvable, and $T\tilde{f}_{\alpha}$ assumes a minimum in the point
\begin{align} \label{tay_diff}
x=-H\left(f_\alpha\right)(x^*)^{-1}\grad\left(f_\alpha\right)(x^*).
\end{align}
This $x$ is the unique stationary point of the second order Taylor approximation $T\tilde{f}_{\alpha}$.

After these preparations, we continue with the distance estimates announced above.

\subsection{Variable mask}
We introduce a parameter $t\in [0,1]$ and  vary the coefficients $\alpha_j$ of a linear scheme such that they linearly depend on $t$. The idea is to choose $\alpha_j(t)$ such that at time $t=0$ we exactly know the minimiser of $f_{\alpha(0)}$, call it the \emph{reference point} $\bar{x}$, and at time $t=1$ the minimiser of $f_{\alpha(1)}$ equals $x^*$. We always assume\begin{align} \label{sum_1}
\sum_{j=-m}^{m+1} \alpha_{j}(t)=1 \quad \text{for any} ~t\in [0,1].
\end{align}
We will see that the choice of the reference point is crucial for our approach to work and has to be made individually for each scheme. 
We illustrate the procedure by means of an example.

\begin{ex}[{{\em cubic Lane-Riesenfeld scheme, part I\/}}] \label{Lane_Riesenfeld}
We 
consider the linear cubic Lane-Riesenfeld subdivision rule defined by
\begin{align*} 
(Sx)_{2i}=\frac{1}{8}x_{i-1}+\frac{6}{8}x_{i}+\frac{1}{8}x_{i+1} \quad \text{and} \quad
  (Sx)_{2i+1}=\frac{1}{2}x_{i}+\frac{1}{2}x_{i+1}
\end{align*} 
for $i \in \mathbb{Z}$.
Since the mask has nonnegative coefficients, Proposition \ref{welldefinednesssphere} ensures that the Riemannian version $T$ of $S$ is well defined, if
\begin{align*}
\sup\limits_{\ell} \dist \big(x_{\ell},x_{\ell+1})<\frac{\pi}{4}.
\end{align*}
After contractivity of $T$ is shown (see (\ref{Gleichungxy})), this bound applies also to the iterates $Tx, T^2x,\dots$.

We observe that $Tx_{2i+1}$ is the geodesic midpoint of $x_i$ and $x_{i+1}$. So, its distance to the input data is bounded by $\frac 12 \sup_{\ell}\dist\left(x_\ell,x_{\ell+1}\right)$. The more crucial part is to deal with $\dist\left(Tx_{2i},x_{i}\right)$. Consider 
\begin{align} \label{Lane_Riesenfeld_simple}
 x^*:=\argmin_{x\in \Sigma^n} \left(\frac{1}{8}\dist(x,x_{-1})^2+\frac{6}{8}\dist(x,x_{0})^2+\frac{1}{8}\dist(x,x_{1})^2\right)
 \end{align} 
for $x_{-1}$, $x_{0}$, $x_{1} \in \Sigma^n$.
Without loss of generality we assume that $x_0=[0,\dots, 0,1]^T$. With $\alpha_{-1}=\alpha_{1}=\frac{1}{8}$, $\alpha_0=\frac 34$ as well as $\alpha_2=0$, $x^*$ is the minimiser of $f_\alpha$. 
We choose the coefficient functions as
\begin{align} \label{coeff_func}
\alpha_{-1}(t)=\alpha_1(t)=\frac t8,\quad \alpha_0(t)=1-\frac{t}{4}.
\end{align} 
The minimiser of $f_{\alpha(0)}$ equals $x_0$ while at time $t=1$ the minimiser of $f_{\alpha(1)}$ is exactly the point $x^*$. 
We continue the analysis in Example \ref{Lane-Riesenfeld3}.
\Exende 
\end{ex}

\subsection{Estimating the distance to a minimiser} \label{sec_est}

We introduce the curve $\gamma$ defined as
\begin{align}\label{curve_gamma}
\gamma(t)=\argmin_{x} f_{\alpha(t)}, \quad t \in [0,1].
\end{align}
Since $\gamma$ connects $\bar{x}$ and $x^*$ we have $\dist\left(\bar{x},x^*\right)\leqslant\int_{0}^{1}\Vert \dot{\gamma}(t)\Vert~dt$. The idea is to estimate $\Vert \dot{\gamma}(t)\Vert$ in order to find an upper bound on the distance between $\bar{x}$ and $x^*$. 
If, for example, we choose the reference point $\bar{x}$ to be one of our input data points, this strategy helps us to control the distance between the minimiser $x^*$ of $f_{\alpha(1)}$ and the initial data. 
We start with some assumptions and afterwards show what conclusions we can draw from them.

\begin{assumption} \label{assumption1}
Assume that 
\begin{align*}
\dist\left(x_j,x_{j+1}\right)\leqslant r
\end{align*}
for some constant $r>0$. We choose $r$ such that the minimiser of $f_{\alpha(t)}$ is locally well defined for all $t\in [0,1]$.
\end{assumption}

\begin{assumption} \label{assumption2} Let $r>0$ be as in Assumption \ref{assumption1} and $\gamma$ as in (\ref{curve_gamma}). Assume that 
\begin{align*}
\Vert\dot{\gamma}(0)\Vert \leqslant rC_0
\end{align*}
for some constant $C_0>0$. 
\end{assumption}

\begin{assumption}\label{assumptions} With $r$, $C_0$ as in Assumption \ref{assumption1} resp.\ \ref{assumption2}, assume that the following is true: If  $\Vert \dot{\gamma}(t)\Vert \leqslant rC_0$ for all $t\in [0,1]$, then there exists a constant $C_1<C_0$ such that $\Vert \dot{\gamma}(t) \Vert\leqslant rC_1<rC_0$ for all $t\in[0,1]$.
\end{assumption}

Assumption \ref{assumption1} is necessary for the well-definedness of the Riemannian analogue of a linear scheme. Assumptions \ref{assumption2} and \ref{assumptions} help to estimate the distance between $\bar{x}$ and $x^*$. 


\begin{lem} \label{lem_ana1}
Let $\gamma$ denote the curve which at time $t$ is the minimiser of $f_{\alpha(t)}$. If Assumptions \ref{assumption1}, \ref{assumption2} and \ref{assumptions} are satisfied for $r>0$ and constants $C_0$ and $C_1$, then 
\begin{align*}
\Vert\dot{\gamma}(t)\Vert \leqslant rC_1
\end{align*}
for all $t\in [0,1]$ and $\dist\left(\gamma(0),\gamma(1)\right)\leqslant rC_1$.
\end{lem}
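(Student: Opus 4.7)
The plan is a standard bootstrap continuity argument: use Assumption \ref{assumption2} to get the bound $\Vert\dot\gamma\Vert \leqslant rC_0$ near $t=0$, extend it to all of $[0,1]$ by continuity together with the strict improvement provided by Assumption \ref{assumptions}, and then integrate along $\gamma$.

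First I would observe that $\gamma$ is continuously differentiable on $[0,1]$. Each point $\gamma(t)$ is characterised by $\grad(f_{\alpha(t)})(\gamma(t)) = 0$, and since the minimum of $f_{\alpha(t)}$ is assumed well defined and unique, the Hessian $H(f_{\alpha(t)})(\gamma(t))$ is invertible at $\gamma(t)$. The implicit function theorem therefore yields smooth dependence of $\gamma$ on $t$, so in particular $t\mapsto \Vert\dot\gamma(t)\Vert$ is continuous on $[0,1]$.

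Next, define
\[
T^{*} := \sup\bigl\{\, t\in[0,1] \,:\, \Vert\dot\gamma(s)\Vert \leqslant rC_0 \text{ for all } s\in[0,t]\,\bigr\}.
\]
By Assumption \ref{assumption2} together with continuity, $T^{*}>0$, and the defining set is closed in $[0,1]$, so $T^{*}$ is attained. The key claim is $T^{*}=1$. Suppose for contradiction that $T^{*}<1$. On $[0,T^{*}]$ one has $\Vert\dot\gamma\Vert \leqslant rC_0$, so Assumption \ref{assumptions}, read for this sub-interval (which is legitimate because the coefficients $\alpha_j(t)$ depend affinely on $t$, so restricting to $[0,T^{*}]$ and reparametrising to $[0,1]$ produces again a linear family of the same form), yields the strict bound $\Vert\dot\gamma(t)\Vert \leqslant rC_1 < rC_0$ on $[0,T^{*}]$. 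Continuity of $\Vert\dot\gamma\Vert$ then forces $\Vert\dot\gamma(t)\Vert < rC_0$ on some interval $[T^{*},T^{*}+\delta]$, contradicting the maximality of $T^{*}$.

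With $T^{*}=1$ established, the hypothesis of Assumption \ref{assumptions} is satisfied on the full interval, so $\Vert\dot\gamma(t)\Vert \leqslant rC_1$ for all $t\in[0,1]$. The distance bound then follows immediately from the curve-length estimate
\[
\dist(\gamma(0),\gamma(1)) \leqslant \int_{0}^{1} \Vert\dot\gamma(t)\Vert\,dt \leqslant rC_1.
\]
The main obstacle is precisely the reparametrisation step in the bootstrap: Assumption \ref{assumptions} is written for the whole interval $[0,1]$, and its use on $[0,T^{*}]$ implicitly requires that the hypothesis be invariant under linear reparametrisations of $t$. This is natural given the affine dependence of $\alpha_j$ on $t$ set up in (\ref{sum_1}), but it has to be borne in mind when Assumption \ref{assumptions} is verified scheme by scheme in the sequel.
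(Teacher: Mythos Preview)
Your bootstrap continuity argument is essentially the paper's proof; the paper defines $t^{*}=\sup\{t:\Vert\dot\gamma(t)\Vert\leqslant rC_1\}$ directly with the threshold $rC_1$, whereas you take the supremum with threshold $rC_0$ on an initial interval and then invoke Assumption~\ref{assumptions} at the end. The reparametrisation subtlety you flag --- that Assumption~\ref{assumptions} is phrased for all of $[0,1]$ but must be applied on $[0,T^{*}]$ --- is glossed over in the paper's terse argument and is indeed harmless once Assumption~\ref{assumptions} is verified via Proposition~\ref{est_norm}, since the bound on $\Vert\dot\gamma(t)\Vert$ obtained there only uses $\Vert\dot\gamma(s)\Vert\leqslant rC_0$ for $s\in[0,t]$.
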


\begin{proof}
Let $t^*=\sup\lbrace t \in [0,1] \mid \Vert \dot{\gamma}(t)\Vert \leqslant rC_1\rbrace$. 
Then,
\begin{align*}
\Vert \dot{\gamma}(t^*)\Vert = \lim_{t<t^*}\Vert \dot{\gamma}(t)\Vert\leqslant rC_1.
\end{align*}
Assume that $t^*<1$. Since $\Vert \dot{\gamma}(t)\Vert$ is continuous, there exists an interval $J=(t^*-\epsilon,t^*+\epsilon)$, $\epsilon>0$, with $\Vert \dot{\gamma}(\tilde{t})\Vert\leqslant rC_1$ for any $\tilde{t} \in J$. But this is a contradiction to $t^*$ being maximal.
\end{proof}

We continue with the analysis of a model subdivision rule.

\begin{ex}[{{\em cubic Lane-Riesenfeld scheme, part II\/}}] \label{Lane-Riesenfeld3}
We have 
\begin{align*}
H\left(f_{\alpha(0)}\right)(x_0)&=2\alpha_{0}(0)\left(v_0v_0^T+\left(I-x_0x_0^T-v_0v_0^T\right)\right)=2I.
\end{align*}
Recall that the second equality is based on the assumption $x_0=[0,\dots,0,1]^T$. In particular, the inverse  $H\left(f_{\alpha(0)}\right)(x_0)^{-1}=\frac 12 I$ exists.
By (\ref{sph_gradientf}) we have
\begin{align*}
&\frac{d}{dt}\bigg|_{t=0}\grad\left(f_{\alpha(t)}\right)(x_0)=-2\frac{d}{dt}\bigg|_{t=0}\sum_{j=-1}^{1} \alpha_j(t) \exp^{-1}_{x_0}(x_j)\\
&=-2\left(\frac 18 \exp^{-1}_{x_0}(x_{-1})+\frac 18 \exp^{-1}_{x_0}(x_{1})\right)=-\frac 14  \left(\exp^{-1}_{x_0}(x_{-1})+\exp^{-1}_{x_0}(x_{1})\right),
\end{align*}
using the geometric fact $\exp^{-1}_{x_0}(x_{0})=0$.
We conclude that
\begin{align*}
\dot{\gamma}(0)=\frac 18 \left(\exp^{-1}_{x_0}(x_{-1})+\exp^{-1}_{x_0}(x_{1})\right).
\end{align*}
Assuming that $\dist\left(x_{j},x_{j+1}\right)\leqslant r$ for some $0<r<\frac{\pi}{4}$ and $j=-1,0$, the above shows that
\begin{align*}
\Vert\dot{\gamma}(0)\Vert \leqslant \frac 14 r.
\end{align*}
This is a first piece of information needed to establish constants $C_0$, $C_1$, and eventually prove convergence of the cubic Lane-Riesenfeld scheme.
\Exende
\end{ex}

We are now estimating the norm of $\dot{\gamma}\left(t\right)$ for some fixed $t \in [0,1]$. The computations are done in the tangent space $T_{\gamma(t)}\Sigma^n$ where for simplicity we always assume that $\gamma(t)$ is the north pole $[0,\dots,0,1]^T$. Of course, the coordinates of the $x_j$'s change for different $t$, but since we only consider distances which are independent of the chosen coordinate system, we do not indicate the coordinate change in the notation.
The Hessian $H\left(\dist(\cdot,y)^2\right)(\gamma(t))$ of the squared distance function has the eigenvalues $\lambda_1=2$ and $\lambda_2=2\psi\left(\dist\left(\gamma(t),x_j\right)\right)$, see Section \ref{distsphere}. In particular, we have $\dist\left(\gamma(t),x_j\right)<\frac{\pi}{2}$, $j=-m,\dots,m+1$, since the radius $r^*$ of the ball containing the input data and the minimiser $\gamma(t)$ is smaller than $\frac{\pi}{4}$, see Section \ref{Ra_sphere}. That is why $\lambda_2\leqslant\lambda_1$ as well as $0<\lambda_2\leqslant 2$. So, we know that the inverse of the Hessian exists.
The location $\gamma(t)$ of the minimiser is implicitly defined by $\grad(f_{\alpha(t)})=0$, so the derivative $\dot{\gamma}(t)$ is determined by the derivatives $Hf_{\alpha}(t)$, $\frac{d}{dt}\grad(f_{\alpha(t)})$. Therefore, for purpose of computing $\dot{\gamma}(t)$, we can replace $f_{\alpha(t)}$ resp. $\tilde{f}_{\alpha(t)}$ by the 2nd order Taylor expansion $T\tilde{f}_{\alpha(t)}$. From (\ref{tay_diff}) we get

\begin{align} \label{x_appro}
\dot{\gamma}(t)&=-\frac{d}{ds}\bigg|_{s=t}\big(\left(H\left(f_{\alpha(s)}\right)\left(\gamma(t)\right)\right)^{-1}\grad\left(f_{\alpha(s)}\right)\left(\gamma(t)\right)\big)\\&=-\left(H\left(f_{\alpha(t)}\right)\left(\gamma(t)\right)\right)^{-1}\frac{d}{ds}\bigg|_{s=t}\grad\left(f_{\alpha(s)}\right)\left(\gamma(t)\right), \notag
\end{align}
where we have used $\grad\left(f_{\alpha(t)}\right)\left(\gamma(t)\right)=0$.

The following two lemmas estimate the spectral norm of the inverse of the Hessian and the derivative of the gradient in order to find an upper bound on $\Vert \dot{\gamma}(t)\Vert$.

 

\begin{lem} \label{Hess_lem}
Assume that $\dist\left(x_j,x_{j+1}\right)\leqslant r$ for some $r>0$ and that $\Vert \dot{\gamma}(t) \Vert \leqslant C_0r$ for $C_0>0$ and all $t\in [0,1]$. Let $\ell_j$ be constants such that $\dist\left(x_j,\bar{x}\right)\leqslant \ell_j$. Then,
\begin{align*}
\Vert \left(H\left(f_{\alpha(t)}\right)\left(\gamma(t)\right)\right)^{-1} \Vert\leqslant \frac{1}{\vert 2-L(t)\vert}
\end{align*}
with $\displaystyle L(t)=\sum_{j=-m}^{m+1} \vert\alpha_{j} (t)\vert\left(2-2\psi\left(C_0rt+\ell_j\right) \right)$ for all $t \in [0,1]$.
\end{lem}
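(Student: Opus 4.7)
My approach is to write the restricted Hessian as a perturbation of $2I$ and then read off the inverse-norm bound from the spectrum. I work in the chart used in Section \ref{distsphere}, choosing coordinates so that $\gamma(t)$ is the north pole; then, by (\ref{sph_hessianf}) and the deletion of the last row and column, the restricted Hessian is the symmetric $n\times n$ matrix
\begin{align*}
H\left(f_{\alpha(t)}\right)\left(\gamma(t)\right) = 2\sum_{j=-m}^{m+1}\alpha_j(t)\bigl(v_jv_j^T + \psi(\rho_j)(I - v_jv_j^T)\bigr),
\end{align*}
with unit vectors $v_j \in \R^n$ and $\rho_j:=\dist(\gamma(t),x_j)$. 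The plan is to control $\|H-2I\|$ and then conclude.

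Using $\sum_j\alpha_j(t) = 1$, the algebraic identity
\begin{align*}
v_jv_j^T + \psi(\rho_j)(I - v_jv_j^T) - I = -(1-\psi(\rho_j))(I - v_jv_j^T)
\end{align*}
together with the triangle inequality and the fact that each orthogonal projector $I - v_jv_j^T$ has spectral norm $1$ gives
\begin{align*}
\|H - 2I\| \leqslant 2\sum_j |\alpha_j(t)|(1 - \psi(\rho_j)).
\end{align*}
Here I use that $\rho_j < \pi/2$, which holds by Proposition \ref{welldefinednesssphere} and the choice of $r < \pi/4$, so $\psi(\rho_j)\in(0,1]$ and $1-\psi(\rho_j)\geqslant 0$.

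Next I need an explicit upper bound on $\rho_j$. By the triangle inequality and the hypothesis $\|\dot\gamma(s)\|\leqslant C_0 r$ on $[0,t]$,
\begin{align*}
\rho_j = \dist(\gamma(t),x_j) \leqslant \dist(\gamma(t),\gamma(0)) + \dist(\bar x, x_j) \leqslant C_0rt + \ell_j.
\end{align*}
Since $\psi$ is monotonically decreasing on $[0,\pi/2)$, this implies $\psi(\rho_j)\geqslant \psi(C_0rt+\ell_j)$, and therefore $\|H - 2I\|\leqslant L(t)$.

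Finally, symmetry of $H$ places its eigenvalues in $[2-L(t),\,2+L(t)]$, so the smallest eigenvalue in absolute value is at least $|2-L(t)|$, yielding $\|H^{-1}\|\leqslant 1/|2-L(t)|$. The main delicate point is to ensure that $H$ is actually invertible at $\gamma(t)$, i.e.\ $L(t)\neq 2$; this is consistent with the local well-definedness of the minimiser from Assumption \ref{assumption1}, which forces $H(f_{\alpha(t)})(\gamma(t))$ to be positive definite. No further properties of the coefficients $\alpha_j(t)$ beyond the standing affine-invariance assumption and the monotonicity of $\psi$ enter the argument.
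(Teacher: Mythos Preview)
Your proof is correct and follows essentially the same route as the paper's: both write $2I - H(f_{\alpha(t)})(\gamma(t))$ as $\sum_j \alpha_j(t)\bigl(2I - H(\dist(\cdot,x_j)^2)(\gamma(t))\bigr)$ via $\sum_j\alpha_j(t)=1$, bound each summand by $2-2\psi(\rho_j)$, estimate $\rho_j\leqslant C_0rt+\ell_j$ from the length of $\gamma$ and the triangle inequality, and then pass to the spectral bound on $H^{-1}$. The only cosmetic difference is that you write out the algebraic identity for $2I-H_j$ explicitly in terms of the projector $I-v_jv_j^T$, whereas the paper simply quotes the eigenvalues $2$ and $2\psi(\rho_j)$ of $H_j$; the content is identical.
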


\begin{proof}
We have
\begin{align*}
\Vert H\left(f_{\alpha(t)}\right)\left(\gamma(t)\right) \Vert &=\Big\Vert \sum_{j=-m}^{m+1}\alpha_{j}(t)H\big(\dist\left(x_j,\cdot\right)^2\big)\left(\gamma(t)\right)\Big\Vert \leqslant 2\sum_{j=-m}^{m+1}\vert\alpha_{j}(t)\vert,
\end{align*}
since the maximal eigenvalue of the Hessian of the squared distance function is $\lambda_1=2$. In particular, the norm of any eigenvalue of the Hessian is bounded from above by $2\sum_{j=-m}^{m+1}\vert\alpha_{j}(t)\vert$.
Furthermore, we see that
\begin{align}\label{estHess1}
\Vert 2I-H\left(f_{\alpha(t)}\right)\left(\gamma(t)\right) \Vert &=\Big\Vert \sum_{j=-m}^{m+1}\alpha_{j}(t)\big(2I-H\big(\dist\left(x_j,\cdot\right)^2\big)\left(\gamma(t)\right)\big)\Big\Vert \notag \\
&\leqslant \sum_{j=-m}^{m+1}\vert\alpha_{j}(t)\vert\Big \Vert 2I-H\big(\dist\left(x_j,\cdot\right)^2\big)\left(\gamma(t)\right)\Big\Vert.
\end{align}
Denote by $\lambda_{2,j}(t)$ the smaller eigenvalue of  $H\big(\dist\left(x_j,\gamma(t)\right)^2\big)$. In fact,
\begin{align*}
\lambda_{2,j}(t)=2\psi\left(\dist\left(\gamma(t),x_j\right)\right)<2.
\end{align*}
Since $\Vert \dot{\gamma}(t) \Vert \leqslant C_0r$ and the fact that $\psi$ is positive and monotonically decreasing in $[0,\frac{\pi}{2}]$ we deduce that
\begin{align*}
\lambda_{2,j}(t)\geqslant 2\psi\left(C_0rt+\ell_j\right).
\end{align*}
By (\ref{estHess1}) we therefore obtain 
\begin{align*}
\Vert 2I-H\left(f_{\alpha(t)}\right)\left(\gamma(t)\right) \Vert &\leqslant L(t)
\end{align*}
and the minimal eigenvalue of $ H\left(f_{\alpha(t)}\right)\left(\gamma(t)\right)$ is bounded from below by $\vert 2-L(t) \vert$. This implies the statement of the lemma.
\end{proof}

\begin{lem} \label{cons2}
Assume that $\dist\left(x_j,x_{j+1}\right)\leqslant r$ for some $r>0$ and that $\Vert \dot{\gamma}(t) \Vert \leqslant C_0r$ for $C_0>0$ and all $t\in [0,1]$.
Let $\ell_j$ be constants such that $\dist\left(x_j,\bar{x}\right)\leqslant \ell_j$. Then,
\begin{align*}
\Big\Vert \frac{d}{ds}\bigg|_{s=t}\grad\left(f_{\alpha(s)}\right)\left(\gamma(t)\right)\Big\Vert&\leqslant 2\sum_{j=-m}^{m+1} \vert\dot{\alpha}_j(t)\vert \left(rC_0t+\ell_j\right)
\end{align*}
for all $ t \in [0,1]$. 
\end{lem}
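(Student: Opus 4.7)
The plan is to differentiate the explicit gradient formula (\ref{sph_gradientf}) in $s$ and then control the resulting tangent vectors by the Riemannian distance. Since the evaluation point $\gamma(t)$ does not depend on $s$, the only $s$-dependence in
\[
\grad\left(f_{\alpha(s)}\right)\left(\gamma(t)\right) = -2\sum_{j=-m}^{m+1} \alpha_j(s)\,\exp^{-1}_{\gamma(t)}(x_j)
\]
sits in the coefficients $\alpha_j(s)$, so differentiation at $s=t$ gives
\[
\frac{d}{ds}\bigg|_{s=t}\grad\left(f_{\alpha(s)}\right)\left(\gamma(t)\right) = -2\sum_{j=-m}^{m+1} \dot{\alpha}_j(t)\,\exp^{-1}_{\gamma(t)}(x_j).
\]
Taking norms and applying the triangle inequality reduces the task to bounding $\Vert\exp^{-1}_{\gamma(t)}(x_j)\Vert$ for each $j$.

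Next I would use the standard identity $\Vert \exp^{-1}_p(q)\Vert = \dist(p,q)$, which is valid here because, by the setting of Section \ref{Ra_sphere} together with Assumption \ref{assumption1}, all the points $x_j$ and $\gamma(t)$ lie within a common ball of radius less than $\pi/4$, so in particular $x_j \neq -\gamma(t)$ and the inverse exponential is well defined. The problem then collapses to the geometric estimate $\dist\left(\gamma(t),x_j\right)\leqslant rC_0 t + \ell_j$.

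To prove this distance bound I would split by the triangle inequality through the reference point $\bar{x}=\gamma(0)$:
\[
\dist\left(\gamma(t),x_j\right) \leqslant \dist\left(\gamma(t),\bar{x}\right) + \dist\left(\bar{x},x_j\right) \leqslant \dist\left(\gamma(t),\bar{x}\right) + \ell_j,
\]
where the second summand is bounded by hypothesis. For the first summand I would bound $\dist(\gamma(t),\bar{x})$ by the arc length of $\gamma$ on $[0,t]$:
\[
\dist\left(\gamma(t),\bar{x}\right) \leqslant \int_0^t \Vert\dot{\gamma}(s)\Vert\,ds \leqslant C_0 r\,t,
\]
using the standing hypothesis $\Vert\dot{\gamma}(s)\Vert\leqslant C_0 r$ on $[0,1]$. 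Combining these inequalities and substituting back yields exactly the estimate claimed in the lemma.

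I do not foresee any serious obstacle: the argument is essentially a differentiation of a finite linear combination followed by the triangle inequality, and the only point requiring care is to verify that the inverse exponential is well defined throughout. This is automatic from the small-radius framework of Section \ref{Ra_sphere}, since the ball containing all relevant points has radius strictly below $\pi/4$, hence well below the cut locus radius $\pi$.
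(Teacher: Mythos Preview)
Your argument is correct and follows essentially the same route as the paper: differentiate the gradient formula (\ref{sph_gradientf}) in $s$, reduce to bounding $\Vert\exp^{-1}_{\gamma(t)}(x_j)\Vert=\dist(\gamma(t),x_j)$, and then apply the triangle inequality through $\bar{x}=\gamma(0)$ together with the arc-length bound $\dist(\gamma(t),\bar{x})\leqslant C_0 r t$. The only cosmetic difference is that the paper writes the triangle inequality directly in terms of norms of inverse exponentials rather than distances.
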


\begin{proof}
Fix some $t \in [0,1]$. Since $\dist\left(\gamma(t),x_j\right)=\Vert \exp^{-1}_{\gamma(t)}(x_j)\Vert$ and, by assumption, $\Vert \dot{\gamma}(t) \Vert \leqslant C_0r$ we deduce that
\begin{align*}
\Vert \exp^{-1}_{\gamma(t)}(x_j)\Vert&\leqslant \Vert \exp^{-1}_{\gamma(t)}(\bar{x})\Vert+\Vert \exp^{-1}_{\bar{x}}(x_j)\Vert \leqslant rC_0t+\ell_j.
\end{align*}
So, (\ref{sph_gradientf}) implies that  
\begin{align*}
\Big\Vert \frac{d}{ds}\bigg|_{s=t}\grad\left(f_{\alpha(s)}\right)\left(\gamma(t)\right)\Big\Vert &\leqslant 2\sum_{j=-m}^{m+1} \bigg|\frac{d}{ds}\bigg|_{s=t}\alpha_j(s) \bigg| \Vert\exp^{-1}_{\gamma(t)}(x_j)\Vert \\
&\leqslant 2\sum_{j=-m}^{m+1}  \vert\dot{\alpha}_j(t)\vert \left(rC_0t+\ell_j\right).
\end{align*}
\end{proof}

We summarise the results of the previous two lemmas in 
\begin{prop} \label{est_norm}
Assume that $\dist\left(x_j,x_{j+1}\right)\leqslant r$ for some $r>0$ and that $\Vert \dot{\gamma}(t) \Vert \leqslant C_0r$ for $C_0>0$ and all $t\in [0,1]$. Let $\ell_j$ be constants such that $\dist\left(x_j,\bar{x}\right)\leqslant \ell_j$. Then,
\begin{align} \label{appro_x(t)}
\Vert \dot{\gamma}(t) \Vert \leqslant \frac{2}{\vert2-L(t)\vert}\sum_{j=-m}^{m+1} \vert\dot{\alpha}_j(t)\vert \left(rC_0t+\ell_j\right)
\end{align}
with $\displaystyle L(t)=\sum_{j=-m}^{m+1} \vert\alpha_{j} (t)\vert\left(2-2\psi\left(C_0rt+\ell_j\right)\right)$ for all $t \in [0,1]$.
\end{prop}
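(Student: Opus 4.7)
The plan is to combine the two preceding lemmas via the formula (\ref{x_appro}), which expresses $\dot{\gamma}(t)$ as a product of the inverse Hessian of $f_{\alpha(t)}$ and the $s$-derivative of $\grad(f_{\alpha(s)})$ evaluated at $s=t$. Once the bounds of Lemmas \ref{Hess_lem} and \ref{cons2} are both available, the proposition becomes an immediate consequence of the submultiplicativity of the spectral norm.

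First I would observe that the hypotheses of Proposition \ref{est_norm} are precisely the joint hypotheses of Lemmas \ref{Hess_lem} and \ref{cons2}: the same $r>0$, the same contractivity bound $\|\dot{\gamma}(t)\|\leqslant C_0 r$ on all of $[0,1]$, and the same constants $\ell_j$ controlling $\dist(x_j,\bar{x})$. So both lemmas apply without modification at any fixed $t\in[0,1]$.

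Next I would start from (\ref{x_appro}),
\begin{align*}
\dot{\gamma}(t) = -\bigl(H(f_{\alpha(t)})(\gamma(t))\bigr)^{-1}\,\frac{d}{ds}\bigg|_{s=t}\grad(f_{\alpha(s)})(\gamma(t)),
\end{align*}
and take norms. Submultiplicativity of the spectral norm yields
\begin{align*}
\Vert\dot{\gamma}(t)\Vert\leqslant \bigl\Vert(H(f_{\alpha(t)})(\gamma(t)))^{-1}\bigr\Vert\cdot\Big\Vert\tfrac{d}{ds}\big|_{s=t}\grad(f_{\alpha(s)})(\gamma(t))\Big\Vert.
\end{align*}
Lemma \ref{Hess_lem} bounds the first factor by $1/|2-L(t)|$, and Lemma \ref{cons2} bounds the second factor by $2\sum_{j=-m}^{m+1}|\dot{\alpha}_j(t)|(rC_0 t + \ell_j)$. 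Multiplying these two bounds produces exactly the estimate (\ref{appro_x(t)}) claimed in the proposition.

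There is no real obstacle: the content of the proposition is entirely packaged inside the two lemmas, and the role of this statement is only to record the combined estimate in a single place for later reference. The only point worth flagging is that one must take $|2-L(t)|$ in the denominator rather than $2-L(t)$, since at this stage there is no sign information on $2-L(t)$; showing $L(t)<2$ (so that the Hessian is invertible and the bound is useful) will be part of the application of the proposition to specific schemes, such as the continuation of the cubic Lane-Riesenfeld example.
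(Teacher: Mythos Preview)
Your proposal is correct and matches the paper's own proof essentially verbatim: the paper also starts from (\ref{x_appro}), takes norms, applies submultiplicativity of the spectral norm, and then invokes Lemmas \ref{Hess_lem} and \ref{cons2} for the two factors. Your additional remark about the absolute value $|2-L(t)|$ is a fair observation but not part of the paper's argument.
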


\begin{proof}
By (\ref{x_appro}) we have
\begin{align*}
\Vert \dot{\gamma}(t) \Vert &\leqslant \Big\| \left(H\left(f_{\alpha(t)}\right)\left(\gamma(t)\right)\right)^{-1}\frac{d}{ds}\bigg|_{s=t}\grad\left(f_{\alpha(s)}\right)\left(\gamma(t)\right) \Big\|\\
&\leqslant \Big\| \left(H\left(f_{\alpha(t)}\right)\left(\gamma(t)\right)\right)^{-1}\Big\| \Big\|\frac{d}{ds}\bigg|_{s=t}\grad\left(f_{\alpha(s)}\right)\left(\gamma(t)\right) \Big\|
\end{align*} 
for all $t\in [0,1]$. The statement then follows from Lemma \ref{Hess_lem} and Lemma \ref{cons2}.
\end{proof}

We illustrate the results of Proposition \ref{est_norm} by means of our main example.

\begin{ex}[{{\em cubic Lane-Riesenfeld scheme, part III\/}}] \label{Lane-Riesenfeld4}
Lemma \ref{Hess_lem} shows that
\begin{align*}
L(t)=\frac{t}{4}\left(2-2\psi\left(C_0rt+r\right)\right)+\left(1-\frac t4\right)\left(2-2\psi\left(C_0rt\right)\right)
\end{align*}
for all $t \in [0,1]$ with $\ell_0=0$ and $\ell_{-1}=\ell_1=r$.
Since this function is strictly increasing in the interval $[0,1]$ we have 
\begin{align*}
L(t)\leqslant \frac{1}{4}\left(2-2\psi\left(C_0r+r\right)\right)+\frac{3}{4}\left(2-2\psi\left(C_0r\right)\right)
\end{align*}
for all $t \in [0,1]$.
We conclude that
\begin{align*}
\Vert \left(H\left(f_{\alpha(t)}\right)\left(\gamma(t)\right)\right)^{-1} \Vert&\leqslant \frac{1}{2-\left(2-\frac 12 \psi\left(C_0r+r\right)-\frac 32 \psi\left(C_0r\right)\right)}=\frac{1}{\frac 12 \psi\left(C_0r+r\right)+\frac 32 \psi\left(C_0r\right)}.
\end{align*}
This bound only depends on $C_0$ and $r$.
This estimate is needed for the verification of Assumption \ref{assumptions} of our method.

Remember that we have chosen $\bar{x}=x_0$ as well as
\begin{align} \label{coeff_func}
\alpha_{-1}(t)=\alpha_1(t)=\frac t8 \quad \text{and} \quad \alpha_0(t)=1-\frac{t}{4}, \quad t \in [0,1].
\end{align}
Assume that $r<\frac{\pi}{4}$ is such that $\dist\left(x_{j},x_0\right)\leqslant r$, for $j=-1,1$. Since $\sum_{j=-1}^1 \vert\dot{\alpha_j}(t)\vert=\frac {1}{2}$ Equation (\ref{appro_x(t)}) reads
\begin{align}\label{eq_norm}
\Vert \dot{\gamma}(t) \Vert &\leqslant  \frac{2}{\frac 12 \psi\left(C_0r+r\right)+\frac 32 \psi\left(C_0r\right)}\left(2\dot{\alpha}_1(t)r+\frac 12 rC_0t\right) \notag \\
&=\frac{2}{\frac 12 \psi\left(C_0r+r\right)+\frac 32 \psi\left(C_0r\right)}\left(\frac 14 r+\frac 12 rC_0t\right)
\end{align}
for all $t\in[0,1]$. In order to make this bound a proof of Assumption \ref{assumptions}, we must do some experimenting. For the sake of demonstration, choose $r_0=\frac 14$ and $C_0=0.53$. Then $\Vert \dot{\gamma}(0) \Vert <rC_0$ for any $0< r\leqslant \frac 14$, see Example \ref{Lane-Riesenfeld3}. In particular, Assumption \ref{assumption2} is satisfied for all $0<r\leqslant r_0$. Next, compute 
\begin{align*}
\frac 12 \psi\left(C_0r_0+r_0\right)+\frac 32 \psi\left(C_0r_0\right) \approx 1.97.
\end{align*}
Since $\psi\left(s\right)$ is positive and monotonically decreasing in $[0,\frac{\pi}{2}]$, we conclude 
\begin{align*}
\frac{2}{\frac 12 \psi\left(C_0r+r\right)+\frac 32 \psi\left(C_0r\right)}\left(\frac{r}{4}+\frac{rC_0}{2}\right) &\leqslant\frac{2}{\frac 12 \psi\left(C_0r_0+r_0\right)+\frac 32 \psi\left(C_0r_0\right)}\left(\frac{r}{4}+\frac{rC_0}{2}\right) \\
&\approx 0.52r
\end{align*}
for any $0< r\leqslant r_0$. By (\ref{eq_norm}) this expresses Assumption \ref{assumptions} with $C_1\approx 0.52$, $C_0=0.53$ and all $0< r\leqslant r_0$. Lemma \ref{lem_ana1} then says
\begin{align*}
\dist\left(x_0, x^*\right)\leqslant 0.39r
\end{align*}
for any $0< r\leqslant r_0$. The computations above have been performed such that this inequality, featuring a $2$-digit rounded number, is correct.

\begin{figure}
\centering\unitlength0.0098\textwidth
\begin{picture}(60,33)\put(-6,0){
\put(20,0){\includegraphics[width=30\unitlength]{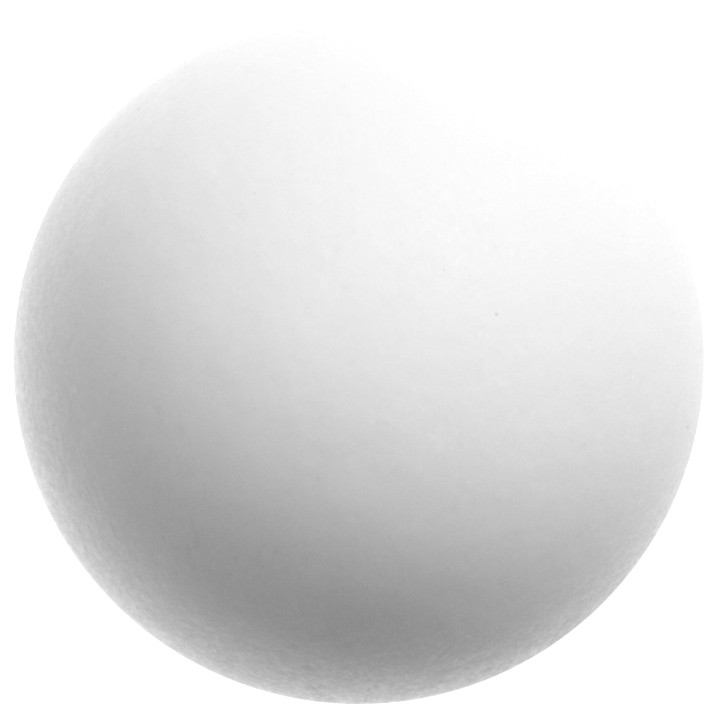}}
\put(20,0){\includegraphics[width=30\unitlength]{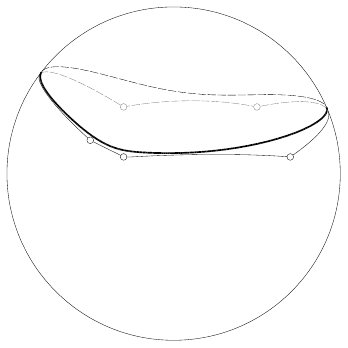}}
}\end{picture}
\caption{Initial polygon and limit curve of the cubic Lane- Riesenfeld scheme on the unit sphere.}
\label{lr_plot}
\end{figure}

We are now in a position to analyse the convergence of the Riemannian analogue $T$ of the linear cubic Lane-Riesenfeld scheme. If we choose input data $(x_i)_{i\in \mathbb{Z}}$ with 
\begin{align*}
\sup\limits_{\ell} \dist\left(x_\ell,x_{\ell+1}\right)<r_0,
\end{align*} 
then our previous computations together with the fact that the points $Tx_{2i+1}$ are the geodesic midpoints of $x_i$, $x_{i+1}$ lead to
\begin{align} \label{Gleichungxy}
\dist\left(Tx_i,Tx_{i+1}\right)\leqslant \frac{r_0}{2}+0.39r_0=0.89r_0, \quad \dist\left(Tx_{2i},x_i\right)\leqslant   0.39 r_0
\end{align}
for all $i \in \mathbb{Z}$.

These inequalities show contractivity and the displacement-safe condition.
Further numerical experiments show that one can even choose $r_0=0.6$ and $C_0=0.69$. In that case, $C_1\approx 0.68<C_0$. See Figure \ref{lr_plot} for an example of the cubic Lane-Riesenfeld scheme on the unit sphere.
\Exende
\end{ex}

Summing up, we formulate
\begin{cor}
Let $(x_i)_{i\in \mathbb{Z}}$ be a sequence of points on the unit sphere. If 
\begin{align*}
\sup\limits_{\ell} \dist(x_\ell,x_{\ell+1})<0.6 \approx 0.19\pi,
\end{align*}
then the Riemannian analogue of the linear cubic Lane-Riesenfeld scheme converges to a continuous, even $C^2$, limit function on the unit sphere.
\end{cor}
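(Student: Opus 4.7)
The plan is to derive the corollary as a direct application of Theorem~\ref{convergence}: one must verify well-definedness of $T$ on all iterates, a contractivity factor $\mu<1$, and the displacement-safe condition for any input with $\sup_\ell\dist(x_\ell,x_{\ell+1})<0.6$. All three ingredients are essentially assembled in Example~\ref{Lane-Riesenfeld4}; the corollary packages those estimates with the optimised choice $r_0=0.6$, $C_0=0.69$.

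First, because the cubic Lane-Riesenfeld mask is nonnegative and $0.6<\pi/4$, Proposition~\ref{welldefinednesssphere} supplies well-definedness of a single application of $T$: the three points feeding $Tx_i$ lie in a ball of radius $<\pi/4$, which is all that is required. That $T^kx$ continues to meet the same bound for every $k$ is a consequence of the contractivity estimate below and will be established in tandem.

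Next, I would run the program of Section~\ref{part2} with $\bar x=x_0$ and the coefficient functions $\alpha_{-1}(t)=\alpha_1(t)=t/8$, $\alpha_0(t)=1-t/4$. Assumption~\ref{assumption1} holds by hypothesis, Assumption~\ref{assumption2} has already been checked in Example~\ref{Lane-Riesenfeld3} ($\Vert\dot\gamma(0)\Vert\le r/4$), and Assumption~\ref{assumptions} is the numerical content of Example~\ref{Lane-Riesenfeld4}: with $r_0=0.6$ and $C_0=0.69$ one evaluates $\psi(C_0r+r)$ and $\psi(C_0 r)$, substitutes into Proposition~\ref{est_norm}, and confirms that the supremum of the resulting bound over $t\in[0,1]$ is at most $rC_1$ with some $C_1\approx 0.68<C_0$. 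Lemma~\ref{lem_ana1} then yields the sup-norm bound on $\dot\gamma$; for the displacement estimate, however, I would use the sharper integral bound
\[
\dist(Tx_{2i},x_i)\le\int_0^1\Vert\dot\gamma(t)\Vert\,dt\le C^\ast r,
\]
with $C^\ast<1/2$, computed in exactly the same manner as the bound $0.39\,r$ obtained in Example~\ref{Lane-Riesenfeld4} for $r_0=1/4$. Combining this with the fact that $Tx_{2i+1}$ is the exact geodesic midpoint of $x_i$ and $x_{i+1}$ yields
\[
\dist(Tx_i,Tx_{i+1})\le \tfrac12 r+C^\ast r<r,\qquad \dist(Tx_{2i},x_i)\le C^\ast r,
\]
so $T$ is displacement-safe and contracts with factor $\mu=\tfrac12+C^\ast<1$. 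Iteration of the contractivity estimate preserves the well-definedness hypothesis across all refinement levels, and Theorem~\ref{convergence} delivers a continuous limit $T^\infty x$.

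The main obstacle is the numerical verification of Assumption~\ref{assumptions} at the enlarged radius $r_0=0.6$: in contrast to the case $r_0=1/4$ the margins are tight, so $\psi(s)=s/\tan s$ must be evaluated at the explicit arguments $C_0r_0+r_0\approx 1.014$ and $C_0r_0\approx 0.414$ with enough precision to certify a strict gap $C_1<C_0$ and, in turn, $C^\ast<1/2$. The $C^2$ claim is not covered by Theorem~\ref{convergence}; it follows from the smoothness-equivalence results for Riemannian analogues of subdivision schemes (e.g.\ \cite{wallner3,wallnerdyn,grohs3}), which transfer the $C^2$ regularity of the linear cubic Lane-Riesenfeld scheme to its Riemannian counterpart once convergence with a sufficiently small contractivity factor has been secured.
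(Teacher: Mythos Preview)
Your proposal is correct and follows essentially the same route as the paper: Example~\ref{Lane-Riesenfeld4} carries out precisely this program (well-definedness via Proposition~\ref{welldefinednesssphere}, verification of Assumptions~\ref{assumption1}--\ref{assumptions} with the variable mask \eqref{coeff_func} and $\bar x=x_0$, the integral bound on $\dist(x_0,x^*)$, and the combination with the geodesic midpoint to obtain \eqref{Gleichungxy}), first for $r_0=\tfrac14$ and then numerically for $r_0=0.6$, $C_0=0.69$, $C_1\approx0.68$. The only minor deviation is the citation for the $C^2$ statement: the paper attributes it to \cite{grohs} rather than \cite{wallner3,wallnerdyn,grohs3}.
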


The $C^2$ statement follows from \cite{grohs}.


\subsection{Example: $4$-point scheme}
We consider the well-known $4$-point scheme defined by
\begin{align}
	(Sx)_{2i}=x_i 
	\hspace{0.5cm} \text{and} \hspace{0.5cm} 
	(Sx)_{2i+1}=-\omega x_{i-1}
		+\Big(\frac1{2} +\omega\Big) x_i
		+\Big(\frac1{2}+\omega\Big) x_{i+1}-\omega x_{i+2},
	\end{align}
for some parameter \(\omega\), $i\in \mathbb{Z}$, see \cite{dyn1}. 
We analyse the $4$-point scheme for $\omega=\frac{1}{16}$, see Figure \ref{vierpunkt_plot} for an example.
\begin{figure}
\centering\unitlength0.0098\textwidth
\begin{picture}(60,33)\put(-6,0){
\put(20,0){\includegraphics[width=30\unitlength]{sphere1.jpg}}
\put(20,0){\includegraphics[width=30\unitlength]{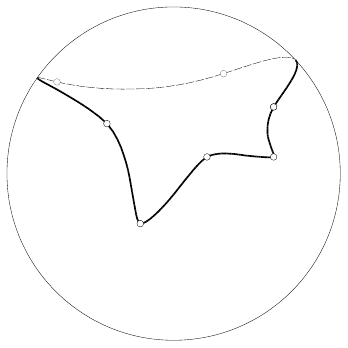}}
}\end{picture}
\caption{Limit curve of the $4$-point scheme applied to input data on the unit sphere.}
\label{vierpunkt_plot}
\end{figure}
First, we focus on 
\begin{align} \label{fourpoint_simple}
 x^*:=\argmin_{x\in \Sigma^n} \Big(&-\frac{1}{16}\dist\left(x,x_{-1}\right)^2+\frac{9}{16}\dist\left(x,x_{0}\right)^2\\
 &+\frac{9}{16}\dist\left(x,x_{1}\right)^2-\frac{1}{16}\dist\left(x,x_{2}\right)^2\Big) \notag
 \end{align} 
for some $x_j\in \Sigma^n$. Let $\dist\left(x_j,x_{j+1}\right)<r_0$ for some $r_0>0$. 
In particular, we have $\alpha_-=\frac 18$. Thus, 
Conditions (\ref{ii}), (\ref{iii}) and Proposition \ref{welldefinednesssphere} imply that  
if $r_0< \frac{0.59}{1.25}\cdot\frac{2}{3}\approx 0.31$, our input data lies inside a ball of small enough radius such that the minimiser $x^*$ is well defined.

So, let $r_0=0.31$ and denote the geodesic midpoint of $x_0$ and $x_1$ by $x_m$. 
Observe that due to our restrictions on the input data $x_m$ is well defined. Choose coefficient functions
\begin{align} \label{coeff_func_fourpoint}
\alpha_{-1}(t)=\alpha_2(t)=-\frac {t} {16} \quad \text{and} \quad \alpha_0(t)=\alpha_1(t)=\frac 12 +\frac{t}{16}
\end{align} 
for $t \in [0,1]$. Let $\gamma(t)$ be the minimiser of $f_{\alpha(t)}$. Then, the minimum of $f_{\alpha(0)}$ equals $x_m$, while the minimum of $f_{\alpha(1)}$ is the point $x^*$.
We see that
\begin{align}\label{GL_XX}
\Big\Vert\frac{d}{dt}\bigg|_{t=0}&\grad\left(f_{\alpha(t)}\right)\left(x_m\right)\Big\Vert\leqslant 2\Big\Vert\frac{d}{dt}\bigg|_{t=0}\sum_{j=-1}^{2} \alpha_j(t) \exp^{-1}_{x_m}(x_j)\Big\Vert\\
&\leqslant 2 \left( \frac {1}{16} \exp^{-1}_{x_m}(x_{-1})+ \frac 1 {16} \exp^{-1}_{x_m}(x_{0})+ \frac 1 {16} \exp^{-1}_{x_m}(x_{1})+\frac {1}{16} \exp^{-1}_{x_m}(x_{2})\right) \notag\\ 
&\leqslant 2 \left( \frac {2}{16} \cdot\frac{3r}{2}+\frac 2 {16} \cdot\frac{r}{2}\right)=2\cdot \frac{1}{4}r \notag
\end{align}
for any $0<r\leqslant r_0$.
Moreover, we deduce that
\begin{align*}
L(t)=\frac{2t}{16}\left(2-2\psi\left(C_0rt+\frac{3}{2}r\right)\right)+2\left(\frac 12 +\frac{t}{16}\right)\left(2-2\psi\left(C_0rt+\frac{1}{2}r\right)\right)
\end{align*}
for a constant $C_0$, $L(t)$ as in Lemma \ref{Hess_lem} and all $t \in [0,1]$. In particular, 
\begin{align*}
L(0)=2-\frac{r}{\tan\left(\frac{1}{2}r\right)}.
\end{align*}
Considered as a function in $r$, $L(0)$ is positive and monotonically increasing for $0<r\leqslant r_0$. Thus, 
$L(0)\leqslant 2-\frac{r_0}{\tan\left(\frac{1}{2}r_0\right)} \approx 0.02$ and $\frac{2}{2-0.02}\cdot\frac{1}{4}r= \frac{50}{198} r$.
Lemma \ref{Hess_lem}, together with our previous computations, yields
\begin{align*}
\Vert \dot{\gamma}(0) \Vert \leqslant  \frac{50}{198} r
\end{align*}
for all $0<r\leqslant r_0$ and Assumption \ref{assumption2} is satisfied for any constant $C_0 > \frac{50}{198}$.
We assume that $\Vert \dot{\gamma}(t) \Vert < C_0r$ for some $0 < r \leqslant r_0$ and all $t\in [0,1]$. Again by monotonicity $L(t)\leqslant L(1)$ and by Proposition \ref{est_norm} we therefore have
\begin{align*}
\Vert \dot{\gamma}(t) \Vert &\leqslant \frac{2}{\vert 2-L(1)\vert}\left( \frac{2}{16}\left(rC_0t+\frac{3}{2}r\right)+\frac{2}{16}\left(rC_0t+\frac{r}{2}\right)\right)\\
&\leqslant  \frac{2}{\vert 2-L(1)\vert} \left( \frac{1}{4}rC_0+\frac 14 r\right)
\end{align*}
for all $t\in [0,1]$. This implies that
\begin{align*}
\dist\left(x_m,x^*\right)&\leqslant 
\frac{2}{\vert 2-L(1)\vert} \left(\frac{1}{8}rC_0+\frac 14 r\right) \quad \text{and} \quad \dist\left(x_0,x^*\right)\leqslant \dist\left(x_m,x^*\right)+\frac{r}{2}
\end{align*}
for any $0<r\leqslant r_0$. Now, we ask $\dist\left(x_m,x^*\right)+\frac{r}{2}<r$ to obtain contractivity as well as a displacement-safe condition. Thus, we are looking for a constant $C_0> 0.26$ together with suitable choices for $r$ such that 
\begin{align*}
 \frac{2}{\vert 2-L(1)\vert}\left( \frac 14 C_0+\frac 14 \right)<C_0
\quad \text{and} \quad
\frac{2}{\vert 2-L(1)\vert}\left( \frac 18 C_0+\frac 14 \right)<\frac 12.
\end{align*}
Numerical computations show that if $C_0=0.45$, both inequalities are satisfied for any $0< r \leqslant r_0=0.31$.
Thus, the Riemannian analogue $T$ of the linear $4$-point scheme is displacement-safe and the maximal distance of consecutive points $T^kx_{i}$, $T^kx_{i+1}$ strictly decreases if $k$ goes to infinity. We have shown 

\begin{cor}
Let $(x_i)_{i\in \mathbb{Z}}$ be a sequence of points on the unit sphere. If 
\begin{align*}
\sup\limits_{\ell} \dist\left(x_\ell,x_{\ell+1}\right)<0.31\approx 0.10\pi,
\end{align*}
then the Riemannian analogue of the linear $4$-point scheme with parameter $\omega=\frac{1}{16}$ converges to a continuous, even $C^1$, limit function on the unit sphere.
\end{cor}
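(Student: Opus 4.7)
The plan is to deduce convergence to a continuous limit from Theorem~\ref{convergence}, which requires three ingredients for the Riemannian analogue $T$ of the $4$-point rule: well-definedness, a contractivity factor $\mu<1$, and the displacement-safe condition. The final $C^1$ assertion will then be a direct appeal to the smoothness results of \cite{grohs}.

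For well-definedness I would substitute the $4$-point mask with $\omega=\tfrac{1}{16}$ (so $\alpha_-=\tfrac{1}{8}$) into the inequalities (\ref{ii})--(\ref{iii}) and check, via Proposition~\ref{welldefinednesssphere}, that any radius $r\leq r_0=0.31$ is admissible. This is the same threshold that appears in the hypothesis $\sup_\ell\dist(x_\ell,x_{\ell+1})<0.31$ of the corollary, so the minimiser defining $Tx_i$ exists uniquely throughout the iteration.

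The main work is the contraction-plus-displacement estimate for a single application of $T$. Following the variable-mask strategy of Section~\ref{part2}, I would define the symmetric interpolation $\alpha_{-1}(t)=\alpha_2(t)=-t/16$, $\alpha_0(t)=\alpha_1(t)=\tfrac12+t/16$ and choose as reference point the geodesic midpoint $x_m$ of $x_0$ and $x_1$; this is precisely $\gamma(0)$ because at $t=0$ the mask degenerates to binary midpoint averaging. The distances from $x_m$ to each $x_j$ are bounded by $\tfrac32 r$, which feeds Lemmas~\ref{Hess_lem} and~\ref{cons2} and, through Proposition~\ref{est_norm}, produces a bound of the form
\begin{align*}
\|\dot\gamma(t)\|\leq \frac{2}{|2-L(1)|}\Bigl(\tfrac14 rC_0+\tfrac14 r\Bigr),
\end{align*}
using monotonicity of $L$ in $t$. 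Once Assumption~\ref{assumptions} is verified for a suitable $C_0$, Lemma~\ref{lem_ana1} gives $\dist(x_m,x^*)\leq rC_1$ with $C_1<C_0$, and the triangle inequality $\dist(x_0,x^*)\leq \tfrac{r}{2}+\dist(x_m,x^*)$ has to be driven strictly below $r$ to obtain both contractivity and displacement-safety at the even indices; the odd indices are automatic geodesic midpoints, controlled by $\tfrac12\sup_\ell\dist(x_\ell,x_{\ell+1})$.

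The main obstacle is the numerical tuning at the very end: one must exhibit a pair $(r_0,C_0)$ simultaneously satisfying the Assumption~\ref{assumptions} self-consistency bound and the two strict inequalities
\begin{align*}
\frac{2}{|2-L(1)|}\Bigl(\tfrac14 C_0+\tfrac14\Bigr)<C_0,\qquad \frac{2}{|2-L(1)|}\Bigl(\tfrac18 C_0+\tfrac14\Bigr)<\tfrac12.
\end{align*}
The choice $r_0=0.31$ and $C_0=0.45$ works; because $L(\cdot)$ and the right-hand sides are monotone in both $r$ and $t$, it is enough to evaluate at $t=1$ and $r=r_0$. With those constants the contractivity factor satisfies $\mu<1$ and the displacement-safe condition holds, so Theorem~\ref{convergence} delivers convergence to a continuous limit, which \cite{grohs} then upgrades to $C^1$.
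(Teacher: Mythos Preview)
Your proposal follows the paper's argument essentially verbatim: the same reference point (the geodesic midpoint $x_m$ of $x_0,x_1$), the same coefficient functions $\alpha_{-1}(t)=\alpha_2(t)=-t/16$, $\alpha_0(t)=\alpha_1(t)=\tfrac12+t/16$, the same estimate via Proposition~\ref{est_norm} leading to the two inequalities you display, and the same numerical choice $r_0=0.31$, $C_0=0.45$.

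There is one slip in your closing paragraph. You write that ``the odd indices are automatic geodesic midpoints''; that is the situation for the cubic Lane--Riesenfeld scheme, not the $4$-point scheme. Here the \emph{even} rule is interpolatory, $Tx_{2i}=x_i$, so displacement-safety is trivial ($\dist(Tx_{2i},x_i)=0$), while the nontrivial rule you analyse is the \emph{odd} one $Tx_{2i+1}=x^*$. Your inequality $\dist(x_0,x^*)\le \tfrac r2+\dist(x_m,x^*)<r$ is exactly what gives the contractivity bound $\dist(Tx_{2i},Tx_{2i+1})<r$ (and, by symmetry, $\dist(Tx_{2i+1},Tx_{2i+2})<r$). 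Finally, the paper attributes the $C^1$ upgrade to \cite{grohs5} rather than \cite{grohs}.
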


The $C^1$ property follows from \cite{grohs5}.

While we have chosen an input data point as reference point in the first example, we have now seen that the choice of a geodesic midpoint of two initial data points yields a convergence result. 
The next example shows that our strategy works for non-interpolatory schemes with negative coefficients, too.

\subsection{Example: A scheme with negative coefficients}
We consider the linear scheme 
\begin{align} \label{neg_mask_exp}
	(Sx)_{2i}
	&=-\frac1{32}x_{i-1}
	+\frac{21}{32}x_i
	+\frac{13}{32}x_{i+1}
	-\frac1{32}x_{i+2}, 
	\\
	(Sx)_{2i+1}
	&=-\frac1{32}x_{i-1}
	+\frac{13}{32}x_i
	+\frac{21}{32}x_{i+1}
	-\frac1{32}x_{i+2}, \notag
\end{align} 
$i \in \mathbb{Z}$.
Because of the symmetry of the two refinement rules it is sufficient to analyse
\begin{align*} 
 x^*:=\argmin_{x\in \Sigma^n} \Big(&-\frac{1}{32}\dist\left(x,x_{-1}\right)^2+\frac{21}{32}\dist\left(x,x_{0}\right)^2 \\
 &+\frac{13}{32}\dist\left(x,x_{1}\right)^2-\frac{1}{32}\dist\left(x,x_{2}\right)^2\Big) \notag
 \end{align*} 
with $x_j\in \Sigma^n$. Let $\dist\left(x_j,x_{j+1}\right)<r_0$ for some $r_0>0$. 

Since $\alpha_-=\frac {1} {16}$, Conditions (\ref{ii}), (\ref{iii}) and Proposition \ref{welldefinednesssphere} imply that
if $r_0< \frac{0.68}{1.125}\cdot\frac{2}{3}\approx 0.4$, our input data lies inside a ball of small enough radius such that the minimiser $x^*$ is well defined.
So, let $r_0=0.4$. We choose as reference point $\bar{x} \in \Sigma^n$ to be the weighted geodesic average of $x_0$ and $x_1$ with weights $\beta_0=0.65$ and $\beta_1=0.35$. These numbers were found by some experimenting.
 Define coefficient functions
\begin{align*} 
\alpha_{-1}(t)=\alpha_2(t)=-\frac {t}{32}, \quad \alpha_0(t)=\frac{65}{100}+\frac{t}{160} \quad \text{and} \quad \alpha_1(t)=\frac{35}{100} +\frac {9}{160}t
\end{align*} 
for $t \in [0,1]$ and let $\gamma$ denote the curve connecting the minimisers of $f_{\alpha(t)}$.
Then, the minimiser of $f_{\alpha(0)}$ is $\bar{x}$, while the minimiser of $f_{\alpha(1)}$ is $x^*$. Analogous to (\ref{GL_XX}) we get
\begin{align*}
\Big\Vert\frac{d}{dt}\bigg|_{t=0}\grad\left(f_{\alpha(t)}\right)(\bar{x})\Big\Vert\leqslant 2 \left( \frac {1}{32} \frac{135}{100}r+\frac 1 {160}\frac{35}{100}r +\frac{9}{160}\frac{65}{100}r+\frac 1{32} \frac{165}{100}r\right)= 2\cdot \frac{53}{400}r
\end{align*}
for any $0<r\leqslant r_0$.
\begin{figure}
\centering\unitlength0.0098\textwidth
\begin{picture}(60,33)\put(-6,0){
\put(20,0){\includegraphics[width=30\unitlength]{sphere1.jpg}}
\put(20,0){\includegraphics[width=30\unitlength]{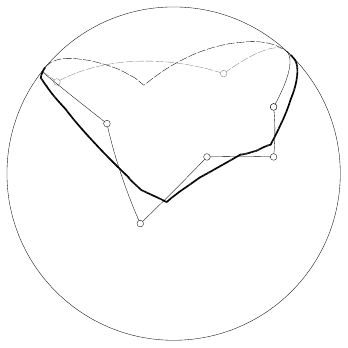}}
}\end{picture}
\caption{Initial polygon and limit curve of the nonlinear analogue of the linear subdivision rule (\ref{neg_mask_exp}) on the unit sphere.}
\label{chvierpunkt_plot}
\end{figure}
Moreover, we deduce that
\begin{align*}
L(t)&=\frac{t}{32}\left(2-2\psi\left(C_0rt+1.35r\right)\right)+\left(0.65 +\frac{t}{160}\right)\left(2-2\psi\left(C_0rt+0.35r\right)\right)\\
&~~~+\left(0.35 +\frac{9}{160}t\right)\left(2-2\psi\left(C_0rt+0.65r\right)\right)+\frac{t}{32}\left(2-2\psi\left(C_0rt+1.65r\right)\right)
\end{align*}
for some constant $C_0$, $L(t)$ as in Lemma \ref{Hess_lem} and all $t \in [0,1]$. Considered as a function in $r$, $L(0)$ is positive an monotonically increasing for $0< r\leqslant r_0$. Thus, $L(0)\leqslant 0.02$ and $\frac{2}{2-0.02}\cdot \frac{53}{400}r \approx 0.134 r$.
Lemma \ref{Hess_lem} and our previous computations show that
\begin{align*}
\Vert \dot{\gamma}(0) \Vert \leqslant 0.14 r
\end{align*}
for all $0<r\leqslant r_0$. So, Assumption \ref{assumption2} is satisfied for any constant $C_0 > 0.14$.
We assume that $\Vert \dot{\gamma}(t) \Vert < C_0r$ for some $0 < r \leqslant r_0$ and all $t\in [0,1]$. Again by monotonicity $L(t)\leqslant L(1)$ and by Proposition \ref{est_norm} we therefore deduce that
\begin{align*}
\Vert \dot{\gamma}(t) \Vert &\leqslant \frac{2}{\vert 2-L(1)\vert}\Big( \frac{1}{32}\left(rC_0t+1.35r\right)+\frac{1}{160}\left(rC_0t+0.35r\right)\\
&~~~+\frac{9}{160}\left(rC_0t+0.65r\right)+\frac{1}{32}\left(rC_0t+1.65r\right)\Big)\\
&=  \frac{2}{\vert 2-L(1)\vert} \left( \frac{1}{8}rC_0t+\frac{53}{400}r\right)\leqslant  \frac{2}{\vert 2-L(1)\vert} \left( \frac{1}{8}rC_0+\frac{53}{400} r\right)
\end{align*}
for all $t\in [0,1]$. This implies that
\begin{align*}
\dist\left(\bar{x},x^*\right)&\leqslant 
\frac{2}{\vert 2-L(1)\vert} \left(\frac{1}{16}rC_0+\frac{53}{400} r\right)
\quad \text{and} \quad
\dist\left(x_0,x^*\right)\leqslant \dist\left(\bar{x},Tx\right)+0.35r
\end{align*}
for any $0<r\leqslant r_0$. Now, we ask $\dist\left(\bar{x},x^*\right)+0.35r<\frac{r}{2}$. 
Thus, we are looking for a constant $C_0>0.14$ together with suitable choices for $r$ such that 
\begin{align*}
 \frac{2}{\vert 2-L(1)\vert}\left( \frac 18 C_0+\frac{53}{400} \right)<C_0
 \quad \text{and} \quad
\frac{2}{\vert 2-L(1)\vert}\left( \frac {1}{16} C_0+\frac{53}{400} \right)<\frac 12-0.35=0.15.
\end{align*}

If $C_0=0.16$, numerical computations show that both inequalities are satisfied for any $0< r \leqslant r_0=0.4$. Thus, the Riemannian analogue $T$ (see Figure \ref{chvierpunkt_plot}) admits a contractivity factor less than 1 and is displacement-safe.
We have shown 

\begin{cor}
Let $(x_i)_{i\in \mathbb{Z}}$ be a sequence of points on the unit sphere. If 
\begin{align*}
\sup\limits_{\ell\in \mathbb{Z}} \dist\left(x_\ell,x_{\ell+1}\right)<0.4\approx 0.13 \pi,
\end{align*}
then the Riemannian analogue of the linear subdivision scheme defined in (\ref{neg_mask_exp}) converges to a continuous limit function on the unit sphere.
\end{cor}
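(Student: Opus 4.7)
The plan is to apply Theorem \ref{convergence}: I need well-definedness of $T$, a contractivity factor $\mu<1$, and the displacement-safe property. By the symmetry of the two refinement rules in (\ref{neg_mask_exp}) it suffices to analyse the even rule, whose output is the minimiser $x^*$ of $f_\alpha$ with mask $(-\tfrac{1}{32},\tfrac{21}{32},\tfrac{13}{32},-\tfrac{1}{32})$. Since $\alpha_-=\tfrac{1}{16}$, Proposition \ref{welldefinednesssphere} together with (\ref{ii})--(\ref{iii}) shows that any $r_0\leqslant 0.4$ places the relevant input data inside a ball on which $x^*$ is uniquely defined.

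Next I would set up the variable-mask construction of Section \ref{sec_est}. Unlike the Lane--Riesenfeld case, the presence of negative coefficients makes an input data point a bad reference: the initial estimate of $\Vert\dot\gamma(0)\Vert$ would come out too large to give a usable $C_0$. Instead, I would take the reference point $\bar{x}$ to be the weighted geodesic average of $x_0$ and $x_1$ with weights $(0.65,0.35)$. The asymmetric split is guided by the fact that the dominant mask coefficient sits at $x_0$, so $x^*$ is pulled towards $x_0$. I would then linearly interpolate $\alpha_j(t)$ from the mask $(0,0.65,0.35,0)$ at $t=0$ to the true mask at $t=1$, using the explicit coefficient functions displayed just before the corollary.

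The bulk of the work is then the verification of Assumptions \ref{assumption2} and \ref{assumptions} via Proposition \ref{est_norm}. A direct triangle-inequality computation at $t=0$ gives $\Vert\dot\gamma(0)\Vert\leqslant 0.14\,r$, so Assumption \ref{assumption2} holds for any $C_0>0.14$. Under the hypothesis $\Vert\dot\gamma(t)\Vert\leqslant C_0 r$ for $t\in[0,1]$, the monotonicity of $L(t)$ in $t$ together with Proposition \ref{est_norm} yields an improved bound of the form $\tfrac{2}{|2-L(1)|}\bigl(\tfrac{C_0}{8}+\tfrac{53}{400}\bigr)r$. Integrating this via Lemma \ref{lem_ana1} and adding the triangle term $\dist(x_0,\bar{x})\leqslant 0.35\,r$ translates into the two scalar inequalities displayed just before the corollary: one expressing contractivity of consecutive iterates, the other the displacement-safe property of $T$ with reference point $x_0$.

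The main obstacle, and the step requiring genuine experimentation rather than routine calculation, is the simultaneous choice of the weights $(0.65,0.35)$, the constant $C_0$, and the radius $r_0$ so that both of these scalar inequalities have a common solution on $(0,r_0]$. I would check numerically that the choice $C_0=0.16$ validates both inequalities for every $0<r\leqslant r_0=0.4$; once this is in place, $T$ admits a contractivity factor less than $1$ and is displacement-safe, and Theorem \ref{convergence} delivers the continuous limit.
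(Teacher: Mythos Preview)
Your proposal is correct and follows essentially the same route as the paper: the same reference point $\bar x$ (the $(0.65,0.35)$ geodesic average of $x_0,x_1$), the same linear coefficient functions $\alpha_j(t)$, the same bound $\Vert\dot\gamma(0)\Vert\leqslant 0.14\,r$, the same use of Proposition~\ref{est_norm} with $L(t)\leqslant L(1)$, and the same numerical choice $C_0=0.16$, $r_0=0.4$ to close both scalar inequalities. The only minor imprecision is your phrase ``integrating this via Lemma~\ref{lem_ana1}'': the paper obtains the sharper distance bound $\dist(\bar x,x^*)\leqslant \frac{2}{|2-L(1)|}\bigl(\frac{1}{16}C_0+\frac{53}{400}\bigr)r$ by directly integrating the $t$-dependent estimate $\frac{2}{|2-L(1)|}\bigl(\frac{1}{8}C_0 t+\frac{53}{400}\bigr)r$ over $[0,1]$, rather than by invoking Lemma~\ref{lem_ana1}, which would only give the coarser constant $\frac{1}{8}C_0$.
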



\subsection*{Acknowledgements}
The authors acknowledge the support of the Austrian Science Fund (FWF) under grant no.\ W1230.
The results of this paper are part of the PhD thesis of the first author.
\bibliographystyle{amsplain}
\bibliography{literatur}{}
	
\end{document}